\newtheorem{theorem}{Theorem}[section]
\newtheorem*{theorem-non}{Theorem}
\newtheorem{corollary}[theorem]{Corollary}
\newtheorem{lemma}[theorem]{Lemma}
\newtheorem{proposition}[theorem]{Proposition}
\theoremstyle{definition} % corpo do documento em romano
\newtheorem{definition}[theorem]{Definition}
\newtheorem*{remark}{Remark}
\newcommand{\comment}[1]{}
\providecommand{\keywords}[1]{\textbf{\textit{Keywords: }} #1}
\DeclareMathOperator{\id}{id}
\DeclareMathOperator{\Id}{Id}
\DeclareMathOperator{\zz}{\mathbb{Z}}
\DeclareMathOperator{\Char}{char}
\title{On some $H$-cleft extensions which are distinguished by their polynomial $H$-identities}
\author{Abel Gomes de Oliveira Jr.\footnote{Department of Mathematics, UFSCar, Brazil, \texttt{abeloliveira@estudante.ufscar.br}} \ and Waldeck Sch\"utzer\footnote{Department of Mathematics, UFSCar, Brazil, \texttt{waldeck@ufscar.br}}}
\date{}
\begin{document}
	\maketitle
	
	\begin{abstract}
		\noindent Let $H_N^q$ be the Taft algebra over a finite commutative ring $R$. When $N$ is a unit in $R$, we show that all $H_N^q$-cleft extensions over $R$ are determined up to $H_N^q$-comodule algebra isomorphism by their polynomial $H_n^q$-identities.
	\end{abstract}
	\keywords{Hopf algebras, comodule algebras, cleft extensions, polynomial identities}

\section*{Introduction}

The main object of study in this paper is the well-known question asking whether the set of polynomials identities distinguishes PI-algebras (associative unital algebras satisfying a nontrivial polynomial identity) up to isomorphism.

When the algebras are defined over an algebraically closed field there are both a range of affirmative results and also of counterexamples. For instance, it follows from the celebrated Amitsur-Levitsky theorem that the standard polynomial of degree $2n$ distinguishes the finite-dimensional central simple associative algebras over an algebraically closed field $k$ up to isomorphism. %
%Counter examples exist in some contexts, for instance when $k$ is not algebraically closed:

If we assume that the $k$-algebras are ``simple'' (here the meaning of simple depends on the full structure of the algebra), several results came to light in an affirmative way. The interested reader can find such examples for instance in \cite{kush}, \cite{drensky}, \cite{koshlukov}, \cite{aljadeffhaile}, \cite{shestakov} and \cite{bahturin}.

In particular, in \cite{kassel} Kassel extends the notion of graded polynomial identities and considers the so-called polynomial $H$-identities, where $H$ is an arbitrary Hopf algebra. Naturally the problem over algebras became a problem over $H$-comodule algebras, namely algebras which are also right $H$-comodules and the right coaction is compatible with the multiplication. When $H=kG$ is the group Hopf algebra of a group $G$, the $H$-comodule algebras are essentially the same as the $G$-graded associative algebras (cf. \cite[Example 1.6.7]{montgomery}) and the polynomial $H$-identities become the usual $G$-graded polynomial identities. More specifically, Kassel studied certain $H$-comodule algebras which are cleft extensions of the ground field $k$, namely the $H$-Galois objects, for $H$ a (generalized) Taft algebra or the Hopf algebra $E(n)$, and showed that these objects are distinguished by their polynomial $H$-identities. In $\cite{schutzer}$ we extended Kassel's results to the case where $H$ is an arbitrary non-semisimple monomial Hopf algebra.

When the field $k$ is not algebraically closed, the situation can be quite different: the quaternions $\mathbb{H}$ are a central simple algebra of dimension $4$ over $\mathbb{R}$ and $\mathbb{C}\otimes_{\mathbb{R}}\mathbb{H}\cong\mathbb{C}\otimes_{\mathbb{R}}M_2(\mathbb{R})$, hence $\mathbb{H}$ and $M_2(\mathbb{R)}$ have the same set of polynomial identities, but they are obviously not isomorphic as algebras.

In this paper we will investigate $H$-cleft extensions over a (commutative unital) ring $R$. With the intention of being able to make use of a criterion to decide if two cleft extensions are isomorphic we choose $H$ to be a Taft algebra ($H_N^q$) over $R$ and use the $H_N^q$-cleft extensions classification criterion made by Masuoka in \cite{masuoka}.

Under some extra hypothesis only over the ring $R$ we have obtained our main theorem:

\begin{theorem-non} \label{final}
	Fix $N\geqslant 2$ and let $R$ be a finite commutative unital ring such that $N \in R^{\times}$. If $R$ contains a root $q$ of the $N$-th cyclotomic polynomial and the polynomial $H_N^q$-identities of two $H_N^q$-cleft extensions $B$ and $B'$ of $R$ coincide then $B \cong B'$ as $H_N^q$-comodule $R$-algebras.
\end{theorem-non}

Throughout this work, $R$ will denote a commutative ring with identity. Unless it is stated otherwise, expressions like algebra, Hopf algebra, linear and $\otimes$ mean $R$-algebra, Hopf algebra over $R$, $R$-linear and $\otimes_R$, respectively. The group of units of $R$ will be denoted by $R^{\times}$.
	
\section{Preliminaries}

	We assume that the reader is familiar with concepts such as Hopf algebras, $H$-comodules and the Heyneman-Sweedler-type notation for the comultiplication $\Delta\colon H\rightarrow H\otimes H$ and the coaction $\delta \colon M\rightarrow M\otimes H$ for a (right) $H$-comodule $M$.
% at least over fields.
These concepts can be found in books such as \cite{dascalescu} and \cite{montgomery}.
%We are interested in these concepts but over rings.
The theory therein is developed over fields but much of it can
readily generalized for commutative unital rings as can be
seen, for instance, in
%As references we leave
\cite[Part II]{chase} and \cite{doi}.
	For convenience, we briefly recall the definition of an $H$-cleft extension and some other related concepts.
		
	Let $H$ be a Hopf algebra and $C$ an algebra.  A right $H$-comodule algebra $B$, with a coaction $\rho \colon B \longrightarrow B\otimes H$, is an \emph{$H$-extension over $C$} if $C=B^{coH}$, where ${B^{coH}=\{b \in B \mid \rho(b)=b \otimes 1\}}$ is the subalgebra of coinvariants of $B$.
		
	An $H$-extension $B$ over $C$ is a \emph{right $H$-Galois extension over $C$} if the map $\beta \colon B\otimes_C B \longrightarrow B\otimes H$ given by $\beta(a\otimes b)=(a\otimes 1)\rho(b)$ is bijective.
		
	An $H$-extension $B$ over $C$ is an \emph{$H$-cleft extension over $C$} if there exists an $H$-comodule map $\varphi \colon H \longrightarrow B$ which is convolution invertible. We can always assume that $\varphi(1)=1$. Such a map $\varphi$ is often called a \emph{section},
%	. In this case,
and a pair $(B, \varphi)$ is called a \emph{cleft system for $H$ over $C$}.
		
	Cleft and Galois extensions are related concepts. In fact, an $H$-cleft extension $B$ over $C$ can be characterized as a right $H$-Galois extension such that there exists a left $C$-module and right $H$-comodule isomorphism $C \otimes H \cong B$. This is a classical result of Doi and Takeuchi \cite{doi}.
		
	From now on fix an integer $N \geqslant 2$ and assume that the ring $R$ contains a (fixed) root $q$ of the $N$-th cyclotomic polynomial over $\mathbb{Z}$. In particular, $q^N=1$ and given $\zeta_N$ a primitive $N$-th root of 1 in $\mathbb{C}$ there exists a ring map $\mathbb{Z}[\zeta_N] \longrightarrow R$ such that $\zeta_N \mapsto q$.
		
	Denote by $H_N^q$ the Hopf algebra with structure defined as follows: as an algebra, $H_N^q$ is generated by the elements $g, x$ with the relations
	\begin{alignat*}{2}
		g^N=1, &\quad x^N=0, &\quad xg=qgx.
	\end{alignat*}
	As a coalgebra, $H_N^q$ is determined by:
	\begin{alignat*}{2}
		\Delta(g)&=g\otimes g, &&\quad \varepsilon(g)=1, \\
		\Delta(x)&=1\otimes x+x \otimes g, &&\quad \varepsilon(x)=0.
	\end{alignat*}
	Finally, the antipode $S: H_N^q \longrightarrow H_N^q$ is given by
	\begin{alignat*}{2}
		S(g)=g^{N-1}=g^{-1}, &\quad S(x)=-q^{-1}g^{-1}x.
	\end{alignat*}
		
	The Hopf algebra $H_N^q$ is called a Taft (Hopf) algebra.
	
	Given $0\leqslant i\leqslant n\leqslant N$, define the $q$-analog of Newton's Binomial:
	\[
	{n\choose i}_{q} = \frac{(q^n-1)\cdots(q^{n-i+1}-1)}{(q^i-1)\cdots(q-1)}.
	\]
	From \cite[Lemma 2.2]{masuoka}, \cite[Lemma 2.5]{masuoka} and \cite[Prop. IV.2.2]{kasselbook} we obtain the following lemma,
	
	\begin{lemma}\label{qbinomiais}
		Fix $q$ a root of the $N$-th cyclotomic polynomial over $\mathbb{Z}$ and let $z$ and $w$ be two variables such that  $zw=q wz$. Then
		\[
		(z+w)^n = \sum_{i=0}^n {n \choose i}_q z^{i}w^{n-i}.
		\]
		In particular, $(z+w)^N=z^N+w^N$.
	\end{lemma}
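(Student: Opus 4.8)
The plan is to establish the general expansion by induction on $n$, and then read off the special case $n=N$ from the vanishing of the intermediate coefficients. The base cases $n=0,1$ are immediate, since $\binom{0}{0}_q=\binom{1}{0}_q=\binom{1}{1}_q=1$. For the inductive step I would write $(z+w)^{n+1}=(z+w)^{n}(z+w)$, insert the inductive hypothesis, and distribute to obtain the two sums $\sum_{i}\binom{n}{i}_q z^{i}w^{n-i}z$ and $\sum_{i}\binom{n}{i}_q z^{i}w^{n-i+1}$. The second sum is already in the normal form $z^{a}w^{b}$; the first requires commuting the trailing $z$ leftward past the block $w^{n-i}$. Iterating the relation $zw=qwz$ replaces $w^{n-i}z$ by a fixed power of $q$ times $z\,w^{n-i}$, so that $z^{i}w^{n-i}z$ becomes that $q$-power times $z^{i+1}w^{n-i}$. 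After reindexing the first sum, the coefficient of $z^{j}w^{n+1-j}$ is then a $q$-weighted combination of $\binom{n}{j}_q$ and $\binom{n}{j-1}_q$.

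The key algebraic input, which is exactly what is distilled from \cite[Lemmas 2.2 and 2.5]{masuoka} and \cite[Prop. IV.2.2]{kasselbook}, is the $q$-Pascal recurrence expressing $\binom{n+1}{j}_q$ as a $q$-weighted sum of $\binom{n}{j}_q$ and $\binom{n}{j-1}_q$. I would verify directly from the defining fraction that this recurrence holds, and then check that the power of $q$ generated by the commutation step above is precisely the exponent appearing in the recurrence, so that the two contributions collapse into $\binom{n+1}{j}_q$ and the induction closes. This bookkeeping of $q$-powers is the one delicate point and is where I expect the main obstacle: one must pin down conventions at the outset (which of the two standard $q$-Pascal forms is used, and in which direction the commutation factor is read off from $zw=qwz$) and confirm that the exponent produced by moving $z$ through $w^{n-i}$ matches the exponent in the recurrence exactly. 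Everything else in the step is routine reindexing.

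For the final assertion I specialize to $n=N$ and use that $q$ is a root of the $N$-th cyclotomic polynomial $\Phi_N$. The relevant classical fact is that $\Phi_N(t)$ divides the Gauss binomial $\binom{N}{i}_t$ in $\mathbb{Z}[t]$ for every $0<i<N$: each primitive $N$-th root of unity is a root of $\binom{N}{i}_t$, and $\Phi_N$ is their squarefree minimal polynomial, whence the divisibility. Evaluating at $t=q$ and transporting along the ring map $\mathbb{Z}[\zeta_N]\to R$ with $\zeta_N\mapsto q$ gives $\binom{N}{i}_q=0$ for all $0<i<N$. Hence in the general expansion only the terms $i=0$ and $i=N$ survive, and since $\binom{N}{0}_q=\binom{N}{N}_q=1$ we obtain $(z+w)^{N}=z^{N}+w^{N}$ at once.

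I would present the divisibility step as the cleanest route to the ``in particular'' clause; an alternative, should one prefer to avoid quoting it, is to observe that the numerator of $\binom{N}{i}_q$ contains the factor $q^{N}-1=0$ while every factor of the denominator is $q^{k}-1$ with $0<k<N$, hence nonzero because $q$ is a primitive root, so the quotient vanishes. Either way the conclusion is the same, and no additional hypotheses beyond $q$ being a root of $\Phi_N$ are needed.
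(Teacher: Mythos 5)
The first thing to note is that the paper contains no proof of this lemma at all: it is imported wholesale from Masuoka's Lemmas 2.2 and 2.5 and Kassel's Prop.\ IV.2.2, so your induction-plus-divisibility argument is in effect a reconstruction of the cited sources' proofs rather than a rival to anything in the text. Its architecture is the right one: $q$-Pascal induction for the general expansion, and for the case $n=N$ the divisibility $\Phi_N(t)\mid\binom{N}{i}_t$ in $\mathbb{Z}[t]$ for $0<i<N$, evaluated at $t=q$, which is exactly the content of Masuoka's Lemma 2.2 and is the only route that works over a ring. Your parenthetical ``alternative'' argument, by contrast, should be dropped: over a general commutative ring a nonzero denominator factor $q^k-1$ may be a zero divisor, the defining fraction for $\binom{N}{i}_q$ only makes sense as the evaluation of the Gaussian polynomial, and nothing available at this point of the paper guarantees $q^k\neq 1$ for $0<k<N$ --- that $o(q)=N$ is only established later (\Cref{lemma}) under the extra hypotheses that $R$ is finite and $N\in R^{\times}$.

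The genuine problem is the one step you explicitly deferred: the matching of the commutation exponent against the $q$-Pascal recurrence, which you correctly identified as the delicate point but did not execute. Run honestly, it does not close for the statement as printed. From $zw=qwz$ one gets $wz=q^{-1}zw$, hence $w^{n-i}z=q^{-(n-i)}zw^{n-i}$, so the coefficients in the $z$-first normal form satisfy $c^{(n+1)}_j=q^{-(n+1-j)}c^{(n)}_{j-1}+c^{(n)}_j$, whose solution is $\binom{n}{j}_{q^{-1}}=q^{-j(n-j)}\binom{n}{j}_q$ rather than $\binom{n}{j}_q$; already $(z+w)^2=z^2+(1+q^{-1})zw+w^2$, not $z^2+(1+q)zw+w^2$. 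The coefficient $\binom{n}{i}_q$ belongs with the opposite ordering, $(z+w)^n=\sum_{i=0}^n\binom{n}{i}_q\,w^{n-i}z^i$, which is in fact the form the paper uses in the proof of \Cref{cara dos hom} (where $(1\otimes x+v_x\otimes g)^n$ is expanded with $v_x^{n-l}\otimes g^{n-l}x^l$); the printed monomial order $z^iw^{n-i}$ is a slip in the lemma's statement. So you should either prove the $w$-first form or record the coefficient as $\binom{n}{i}_{q^{-1}}$. Note that since $\Phi_N$ is self-reciprocal for $N\geqslant 2$ and $q\in R^{\times}$, the element $q^{-1}$ is again a root of $\Phi_N$, so your divisibility argument applies under either convention and the clause $(z+w)^N=z^N+w^N$ --- the only part of the lemma quoted verbatim elsewhere --- is unaffected.
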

	
	By using Bergman's diamond lemma \cite{bergman}, Masuoka \cite[Lemma 2.5]{masuoka} shows that $H_N^q$ is a free $R$-module with basis $\{g^mx^n, \ 0 \leqslant m,n <N\}$.
In that same text, he	
	%Masuoka in \cite{masuoka}
produces a classification of the $H_N^q$-cleft extensions for an algebra $C$. 
%Here we recall that classification and adapt his results to the particular case $C=R$:	
Here we wish to consider $C=R$ only, in which case
that classification can be slightly simplified and tailored down
to our needs. We will briefly do so.
	
	\begin{definition} \cite[Def. 2.14]{masuoka}
		Given a triple $\underline{d}=(u,a,b)$, where $u \in R^{\times}$ and $a,b \in R$, define a pair ($\mathcal{B}_{\underline{d}}, \varphi_{\underline{d}}$) as follows: $\mathcal{B}_{\underline{d}}$ is the $R$-algebra  generated by symbols $v_g, v_x$ with relations
		\begin{enumerate}[label=(R\arabic*)]
			\item $v_g^N=u$,
			\item $v_x^N=a$,
			\item $v_xv_g=qv_gv_x+bv_g^2$
		\end{enumerate}
	and $\varphi_{\underline{d}} \colon H_N^q \longrightarrow \mathcal{B}_{\underline{d}}$ is the linear map defined by 
	\[
	\varphi_{\underline{d}}(g^mx^n)=v_g^mv_x^n, \ 0\leqslant m , n <N.
	\]
	\end{definition}

The triple $\underline{d}=(u,a,b)$ is called a \textit{cleft data}.
		
		The following two results intend to ensure that $\mathcal{B}_{\underline{d}}$ is an $H_N^q$-cleft extension of $R$. Moreover, any $H_N^q$-cleft extension of $R$ is isomorphic to $\mathcal{B}_{\underline{d}}$ for some $\underline{d}$.
		
	\begin{proposition}[] \label{cua}\cite[Prop. 2.15]{masuoka}
		\begin{enumerate}[label=\arabic*)]
			\item $\mathcal{B}_{\underline{d}}$ is a free $R$-module with basis $\{v_g^mv_x^n, \ 0 \leqslant m,n <N\}$.
			\item $\mathcal{B}_{\underline{d}}$ has a (right) $H_N^q$-comodule algebra structure determined by
			\begin{align*}
				v_g &\mapsto v_g\otimes g\\
				v_x &\mapsto 1\otimes x+v_x\otimes g.
			\end{align*}
			The subalgebra of coinvariants of $\mathcal{B}_{\underline{d}}$ is $R$.
			\item The map $\varphi_{\underline{d}}\colon H_N^q \longrightarrow \mathcal{B}_{\underline{d}}$ given by 
			\[\varphi_{\underline{d}}(g^mx^n)=v_g^mv_x^n, \ 0 \leqslant m,n <N\]
				 is a section for $\mathcal{B}_{\underline{d}}$.
		\end{enumerate}
		Thus $(\mathcal{B}_{\underline{d}}, \varphi_{\underline{d}})$ is a cleft system for $H_N^q$ over $R$.
	\end{proposition}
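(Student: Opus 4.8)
The plan is to establish the three assertions in turn: I would treat 1) by a normal-form (Bergman diamond lemma \cite{bergman}) argument, and then 2) and 3) by direct verification on generators and on the resulting PBW basis, using Lemma~\ref{qbinomiais} as the main computational tool. Throughout, the hypotheses on $R$ and $q$ enter through two facts: $u\in R^{\times}$, so that $v_g$ is a unit with $v_g^{-1}=u^{-1}v_g^{N-1}$; and $q^N=1$ together with $[N]_q:=1+q+\cdots+q^{N-1}=0$, the latter because $\frac{t^N-1}{t-1}=\prod_{d\mid N,\,d>1}\Phi_d(t)$ has the $N$-th cyclotomic polynomial $\Phi_N$ as a factor and $\Phi_N(q)=0$.

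For 1), I would orient the defining relations as rewriting rules $v_xv_g\to qv_gv_x+bv_g^2$, $v_g^N\to u$ and $v_x^N\to a$, and order monomials by the degree-lexicographic order with $v_x>v_g$; each rule is strictly decreasing (the first preserves total degree but lowers the lexicographic part), so reduction terminates. The irreducible words are exactly the ordered monomials $v_g^mv_x^n$ with $0\leqslant m,n<N$, so by the diamond lemma it remains to check that every overlap ambiguity is resolvable. The self-overlaps of $v_g^N$ and of $v_x^N$ reduce to central scalars and are trivially confluent, and the overlap $v_x^Nv_g$ is routine. The decisive ambiguity is $v_xv_g^N$: reducing $v_g^N$ first gives $uv_x$, whereas repeatedly applying the first rule yields, via the identity $v_xv_g^k=q^kv_g^kv_x+[k]_q\,bv_g^{k+1}$, the value $q^Nv_g^Nv_x+[N]_q\,bv_g^{N+1}$; since $v_g^N=u$ this equals $uv_x+[N]_q\,ub\,v_g$, which coincides with $uv_x$ precisely because $q^N=1$ and $[N]_q=0$. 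This confluence check is where the cyclotomic hypothesis on $q$ is indispensable, and it is the main obstacle of the proof; once it is settled, 1) follows.

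For 2), I would first check that $v_g\mapsto v_g\otimes g$ and $v_x\mapsto 1\otimes x+v_x\otimes g$ respect (R1)--(R3), so that $\rho$ extends to an algebra map $\mathcal{B}_{\underline{d}}\to\mathcal{B}_{\underline{d}}\otimes H_N^q$. Relation (R1) is immediate from $v_g^N=u$ and $g^N=1$; for (R2) I set $z=1\otimes x$, $w=v_x\otimes g$, observe $zw=qwz$ (using $xg=qgx$), and invoke Lemma~\ref{qbinomiais} to get $\rho(v_x)^N=(z+w)^N=z^N+w^N=1\otimes x^N+v_x^N\otimes g^N=a\otimes 1$; relation (R3) is a short direct computation that collapses via $v_xv_g=qv_gv_x+bv_g^2$. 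Coassociativity and counitality of $\rho$ are then verified on the two generators. For the coinvariants, expanding with Lemma~\ref{qbinomiais} gives $\rho(v_g^mv_x^n)=\sum_{i=0}^{n}\binom{n}{i}_{q}q^{i(n-i)}\,v_g^mv_x^{n-i}\otimes g^{m+n-i}x^i$; writing a general element $b=\sum c_{m,n}v_g^mv_x^n$ and isolating the terms of top $x$-degree shows first that $\rho(b)=b\otimes1$ forces all $c_{m,n}$ with $n\geqslant1$ to vanish, and then that $\sum_m c_{m,0}v_g^m\otimes g^m=\sum_m c_{m,0}v_g^m\otimes1$ forces $c_{m,0}=0$ for $m\geqslant1$. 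Hence $\mathcal{B}_{\underline{d}}^{\,coH}=R$.

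For 3), the comodule-map identity $\rho\circ\varphi_{\underline{d}}=(\varphi_{\underline{d}}\otimes\id)\circ\Delta$ is checked on the basis: a computation of $\Delta(g^mx^n)$ identical in shape to the one above (now with $z=1\otimes x$, $w=x\otimes g$) matches the formula for $\rho(v_g^mv_x^n)$ term by term. For convolution invertibility I would construct the inverse $\psi$ inductively along the coalgebra filtration by $x$-degree, $H_{(n)}=\bigoplus_{m,\,j\leqslant n}Rg^mx^j$, which satisfies $\Delta(H_{(n)})\subseteq\sum_{i+j=n}H_{(i)}\otimes H_{(j)}$. On grouplikes one is forced to take $\psi(g^m)=v_g^{-m}$, which exists because $u\in R^{\times}$; on $g^mx^n$ the convolution equation $\sum\varphi((g^mx^n)_{(1)})\psi((g^mx^n)_{(2)})=\varepsilon(g^mx^n)\,1$ has leading term $v_g^m\,\psi(g^mx^n)$, so it can be solved for $\psi(g^mx^n)$ in terms of $\psi$ on strictly lower $x$-degree, again using invertibility of $v_g^m$. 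This yields a right convolution inverse, and the symmetric computation a left one, so $\varphi_{\underline{d}}$ is convolution invertible with $\varphi_{\underline{d}}(1)=1$. Combining 2) and 3), $\mathcal{B}_{\underline{d}}$ is an $H_N^q$-extension over $R$ admitting the section $\varphi_{\underline{d}}$, i.e.\ $(\mathcal{B}_{\underline{d}},\varphi_{\underline{d}})$ is a cleft system for $H_N^q$ over $R$.
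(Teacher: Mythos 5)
The paper itself offers no proof of this proposition: it is imported verbatim from Masuoka (\cite[Prop.\ 2.15]{masuoka}), so the only comparison available is with the cited source's method, and your plan coincides with it --- Bergman's diamond lemma for the basis in 1) (the same device the paper records Masuoka using for the basis of $H_N^q$ itself), then direct verification for 2) and 3). Your parts 2) and 3) are correct and complete: the relation checks for $\rho$ (including the use of Lemma~\ref{qbinomiais} with $z=1\otimes x$, $w=v_x\otimes g$), the formula $\rho(v_g^mv_x^n)=\sum_{i}\binom{n}{i}_q q^{i(n-i)}v_g^mv_x^{n-i}\otimes g^{m+n-i}x^i$, the top-$x$-degree argument for $\mathcal{B}_{\underline d}^{\,coH}=R$, and the recursive construction of the convolution inverse along the $x$-degree coalgebra filtration (legitimate because $v_g^{-1}=u^{-1}v_g^{N-1}$ exists, and a one-sided recursion on each side suffices since left and right inverses agree in the associative convolution algebra).

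The one genuine gap is in part 1), and you have the difficulty located on the wrong ambiguity. The overlap $v_xv_g^N$ that you call decisive is the easy one: your identity $v_xv_g^k=q^kv_g^kv_x+[k]_q\,bv_g^{k+1}$ settles it in two lines. The overlap $v_x^Nv_g$, which you dismiss as routine, is the genuinely nontrivial one when $b\neq 0$. Iterating (R3) reduces $v_x^nv_g$ to the normal-ordered expansion of $v_g(qv_x+bv_g)^n$, so confluence demands that $(qv_x+bv_g)^N=(v_x+q^{-1}bv_g)^N$ reduce exactly to $a$; this is \emph{not} a consequence of Lemma~\ref{qbinomiais}, because $v_x$ and $v_g$ do not $q$-commute when $b\neq0$. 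What is needed is the product identity
\[
(v_x+tv_g)^N=v_x^N+\prod_{i=0}^{N-1}\bigl(t+[i]_q\,b\bigr)v_g^N,
\qquad [i]_q:=1+q+\cdots+q^{i-1},
\]
with all intermediate coefficients vanishing by cyclotomic identities --- precisely Masuoka's Lemma~2.2, the same identity that produces the middle equation of \eqref{isocond} in \Cref{criterio}. Specializing $t=q^{-1}b$ turns the $i$-th factor into $q^{-1}[i+1]_q\,b$, so the $i=N-1$ factor is $q^{-1}[N]_q\,b=0$ and the ambiguity resolves; thus the cyclotomic hypothesis is indispensable here too, not only at $v_xv_g^N$. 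A sanity check that this step is not routine: for $N=2$, $q=-1$, a direct computation gives $(v_x+tv_g)^2=a+t(t+b)u$, not the naive $a+t^2u$. So your architecture is right and the proof is completable, but as written the only substantive confluence computation in the whole argument is left undischarged.
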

	
	\begin{proposition}\label{cleft-caract}\cite[Prop. 2.17]{masuoka}
		Any $H_N^q$-cleft extension over $R$ is isomorphic as $H_N^q$-comodule algebras to $\mathcal{B}_{\underline{d}}$ for some cleft data $\underline{d}$.
	\end{proposition}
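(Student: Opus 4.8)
The plan is to recover Masuoka's cleft data $\underline d=(u,a,b)$ directly from an abstract cleft system and then to identify $B$ with $\mathcal B_{\underline d}$ by matching $R$-bases. Let $(B,\varphi)$ be a cleft system for $H_N^q$ over $R$, so that $\varphi\colon H_N^q\to B$ is a convolution-invertible $H$-comodule map with $\varphi(1)=1$ and $B^{coH}=R$. I set $v_g:=\varphi(g)$ and $v_x:=\varphi(x)$. Evaluating the convolution identity $\varphi*\varphi^{-1}=\varepsilon(\cdot)\,1$ at the grouplike $g$ shows $v_g\,\varphi^{-1}(g)=\varphi^{-1}(g)\,v_g=1$, so $v_g\in B^{\times}$. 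Because $\varphi$ is a comodule map and the coaction $\rho$ is an algebra map, I would compute $\rho(v_g)=v_g\otimes g$ and $\rho(v_x)=1\otimes x+v_x\otimes g$, which are exactly the coactions of Proposition~\ref{cua}. The three steps are then: produce $\underline d$ so that $v_g,v_x$ satisfy (R1)--(R3); deduce a comodule-algebra map $\Phi\colon\mathcal B_{\underline d}\to B$; and prove $\Phi$ is an isomorphism.

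For the relations, I would first note $\rho(v_g^N)=(v_g\otimes g)^N=v_g^N\otimes g^N=v_g^N\otimes 1$, so $v_g^N\in B^{coH}=R$; set $u:=v_g^N$. Since $v_g\in B^{\times}$, $u$ is a unit of $B$, and as a cleft extension is $R$-free, hence faithfully flat over $R$, a unit of $B$ lying in $R$ is already a unit of $R$; thus $u\in R^{\times}$. Next, in $B\otimes H$ the elements $1\otimes x$ and $v_x\otimes g$ satisfy $(1\otimes x)(v_x\otimes g)=q\,(v_x\otimes g)(1\otimes x)$, so Lemma~\ref{qbinomiais} gives $\rho(v_x^N)=(1\otimes x+v_x\otimes g)^N=1\otimes x^N+v_x^N\otimes g^N=v_x^N\otimes 1$, whence $v_x^N\in R$ and I set $a:=v_x^N$. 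Finally, a short computation using $xg=qgx$ collapses $\rho(v_xv_g-qv_gv_x)$ to $(v_xv_g-qv_gv_x)\otimes g^2$; multiplying by $\rho(v_g^{-2})=v_g^{-2}\otimes g^{-2}$ shows $(v_xv_g-qv_gv_x)v_g^{-2}\in B^{coH}=R$, so $v_xv_g-qv_gv_x=b\,v_g^2$ with $b:=(v_xv_g-qv_gv_x)v_g^{-2}\in R$. With $\underline d=(u,a,b)$ a genuine cleft data, the universal property of $\mathcal B_{\underline d}$ yields a unique $R$-algebra map $\Phi\colon\mathcal B_{\underline d}\to B$ with $\Phi(v_g)=v_g$, $\Phi(v_x)=v_x$, and it is a comodule map since the coactions agree on generators and $\rho$ is multiplicative.

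The crux is showing $\Phi$ is an isomorphism. By Proposition~\ref{cua}(1) the monomials $v_g^mv_x^n$ ($0\le m,n<N$) form an $R$-basis of $\mathcal B_{\underline d}$, so it suffices to prove their images form an $R$-basis of $B$. I would filter $H$ by $x$-degree, $H_n=\operatorname{span}_R\{g^mx^j:0\le m<N,\ 0\le j\le n\}$, a multiplicative filtration with $\Delta(H_n)\subseteq H\otimes H_n$ for which $\{g^mx^j\}$ is a filtered basis, so $\operatorname{gr}(H)$ is $R$-free with homogeneous basis the classes $\overline{g^mx^n}$. Setting $B_n:=\{b\in B:\rho(b)\in B\otimes H_n\}$ gives an algebra filtration with $\varphi(H_n)\subseteq B_n$, and the cleft normal-basis isomorphism $R\otimes H\cong B$, $r\otimes h\mapsto r\varphi(h)$, is filtered, so $\operatorname{gr}(B)$ is $R$-free with homogeneous basis the classes $\overline{\varphi(g^mx^n)}$. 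I would then compute leading symbols via $(\id\otimes\pi_n)\rho$, where $\pi_n\colon H_n\to H_n/H_{n-1}$: using $\rho(v_g^m)=v_g^m\otimes g^m$ and the top term $(\id\otimes\pi_n)\rho(v_x^n)=1\otimes\overline{x^n}$ (again from Lemma~\ref{qbinomiais}) one finds $(\id\otimes\pi_n)\rho(v_g^mv_x^n)=v_g^m\otimes\overline{g^mx^n}$, while the same computation for the basis elements gives $(\id\otimes\pi_n)\rho(\varphi(g^mx^n))=\varphi(g^m)\otimes\overline{g^mx^n}$. Writing $v_g^m=\mu_m\varphi(g^m)$ inside the rank-one free module $B_0^{(m)}:=\{b\in B_0:\rho(b)=b\otimes g^m\}$, the invertibility of $v_g$ forces $\mu_m\in R^{\times}$, so $\overline{v_g^mv_x^n}=\mu_m\,\overline{\varphi(g^mx^n)}$ in $\operatorname{gr}(B)$ and $\{\overline{v_g^mv_x^n}\}$ is again a homogeneous $R$-basis of $\operatorname{gr}(B)$. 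By the standard filtered-basis lemma (symbols forming a basis together with freeness of each $\operatorname{gr}_n$ force the filtration to split), $\{v_g^mv_x^n\}$ is an $R$-basis of $B$; hence $\Phi$ carries a basis to a basis, so it is an $R$-algebra isomorphism, and being a comodule map it is an isomorphism of $H_N^q$-comodule algebras.

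The only genuine difficulty is this last step. The subtle point is that $\varphi$ is merely a comodule map and not an algebra map, so in general $v_g^mv_x^n\neq\varphi(g^mx^n)$ and one cannot simply read a basis off $\varphi$; the leading-symbol computation is precisely what controls the discrepancy and shows the change of basis is triangular with unit diagonal for the $x$-degree filtration. The supporting facts that the grouplike part $B_0$ is $\mathbb Z/N$-strongly graded (each $B_0^{(m)}$ free of rank one, generated by the unit $v_g^m$) and that a unit of $B$ lying in $R$ is a unit of $R$ (faithful flatness of $B/R$) are where the invertibility of $v_g$ and the freeness of cleft extensions enter; I expect both to be routine once isolated, so the filtered/associated-graded argument is the heart of the proof.
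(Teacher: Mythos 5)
Your proof is correct, but note that the paper itself contains no proof of this proposition to compare against: it is quoted verbatim from Masuoka \cite[Prop.\ 2.17]{masuoka}. Your extraction of the cleft data is the standard route (and essentially Masuoka's): setting $v_g=\varphi(g)$, $v_x=\varphi(x)$, your computations of $\rho(v_g^N)$, $\rho(v_x^N)$ (via \Cref{qbinomiais} applied to $z=1\otimes x$, $w=v_x\otimes g$, which do satisfy $zw=qwz$) and of $\rho(v_xv_g-qv_gv_x)=(v_xv_g-qv_gv_x)\otimes g^2$ all check out, and they correctly land the relations (R1)--(R3) with coefficients in $B^{coH}=R$. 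One small simplification: you do not need faithful flatness to see $u\in R^{\times}$, since $\rho(v_g^{-1})=\rho(v_g)^{-1}=v_g^{-1}\otimes g^{-1}$ shows $v_g^{-N}\in B^{coH}=R$ directly, so the inverse of $u$ already lies in $R$ (and likewise for your units $\mu_m$). The genuinely distinctive part of your write-up is the bijectivity argument via the $x$-degree filtration and associated graded; it is sound as written (the leading-symbol computations are right, $B_n$ corresponds exactly to $\mathrm{span}_R\{\varphi(g^mx^j):j\le n\}$ under the normal-basis isomorphism, and the triangular change of basis with unit diagonal $\mu_m$ does the job), but it is heavier than necessary. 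Since $\Phi$ is an algebra map and $\varphi_{\underline{d}}$ is convolution invertible by \Cref{cua}(3), the composite $\psi:=\Phi\circ\varphi_{\underline{d}}$ is itself a convolution-invertible $H_N^q$-comodule map, i.e.\ a section for $B$; applying the very same Doi--Takeuchi normal-basis isomorphism you already invoke, now to $\psi$ instead of $\varphi$, exhibits $\psi(g^mx^n)=\Phi(v_g^mv_x^n)=v_g^mv_x^n$ ($0\leqslant m,n<N$) directly as an $R$-basis of $B$, so $\Phi$ carries the basis of $\mathcal{B}_{\underline{d}}$ from \Cref{cua}(1) to a basis of $B$ with no filtered or graded analysis at all. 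In short: your proof is complete and correct, the first two paragraphs reconstruct the cited argument, and the third can be collapsed by reusing the normal-basis theorem for the transported section.
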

		
		Knowing that any $H_N^q$-cleft extension of $R$ is of type $\mathcal{B}_{\underline{d}}$, the next result provides a way to distinguish these cleft extensions.
		
	\begin{proposition}\label{criterio}\cite[Lemma 2.19]{masuoka}
		Let $\mathcal{B}_{\underline{d}}$ and $\mathcal{B}_{\underline{d'}}$ $H_N^q$-cleft extensions of $R$. Then
		\begin{enumerate}
			\item If $F\colon \mathcal{B}_{\underline{d'}} \longrightarrow \mathcal{B}_{\underline{d}}$ is an isomorphism of right $H_N^q$-comodule algebras, then there exists a  pair $(s,t) \in R^{\times}\times R$ such that
			\begin{alignat}{2} \label{isoF}
				F(v_g')=sv_g&\quad \text{ and } &\quad F(v_x')=v_x+tv_g.
			\end{alignat}
		\item If $(s,t) \in R^{\times}\times R$, then the algebra map $F\colon \mathcal{B}_{\underline{d}} \longrightarrow \mathcal{B}_{\underline{d'}}$ given by \cref{isoF} is well defined if and only if
			\begin{equation}
				\begin{aligned}\label{isocond}
					u' &=s^Nu,\\
					a' &= a+t(t+b+qb)(t+b+q^2b)\ldots(t+b+q^{N-1}b)u,\\
					b' &= (b+t-qt)s^{-1}.
				\end{aligned}
			\end{equation}
		In this case, $F$ is a right $H_N^q$-comodule algebra isomorphism.
		\end{enumerate}
	\end{proposition}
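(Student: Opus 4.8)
The plan is to prove the two parts separately: part (1) is a constraint forced purely by the $H_N^q$-comodule structure, while part (2) amounts to translating the three defining relations of $\mathcal{B}_{\underline{d'}}$ into the scalar equations \eqref{isocond}, the only substantial point being the evaluation of $(v_x+tv_g)^N$. Throughout I would use the explicit coaction on the basis, which one computes from \cref{qbinomiais} together with $g^mx^i=q^{-mi}x^ig^m$: for $0\le m,n<N$,
\[
\rho(v_g^mv_x^n)=\sum_{i=0}^n\binom{n}{i}_q q^{-mi}\,v_g^mv_x^{n-i}\otimes x^ig^{m+n-i}.
\]

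For part (1), write $F(v_g')=\sum c_{m,n}v_g^mv_x^n$ and impose $\rho(F(v_g'))=F(v_g')\otimes g$. The $H_N^q$-component of the right-hand side sits in $x$-degree $0$; reading off the top $x$-degree term $\sum_m c_{m,n_0}q^{-mn_0}v_g^m\otimes x^{n_0}g^m$ of the left-hand side (where $n_0$ is the top $v_x$-degree) and using the freeness of the bases forces $n_0=0$, after which matching the $\otimes g^m$ parts leaves only $m=1$. Hence $F(v_g')=sv_g$. The same top-degree analysis applied to $\rho(F(v_x'))=1\otimes x+F(v_x')\otimes g$ bounds the $v_x$-degree of $F(v_x')$ by $1$, the $1\otimes x$ term fixes the coefficient of $v_x$ to be $1$, and the $\otimes g$ part again kills every $v_g^m$ with $m\neq1$; thus $F(v_x')=v_x+tv_g$. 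Finally $s\in R^{\times}$: applying $F$ to $(v_g')^N=u'$ yields $s^Nu=u'$, and since $u,u'\in R^{\times}$ so is $s^N$, whence $s$ is a unit.

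For part (2), the assignment \eqref{isoF} on the free algebra descends to $\mathcal{B}_{\underline{d'}}$ if and only if its images $sv_g$ and $v_x+tv_g$ satisfy the relations (R1)--(R3) of $\mathcal{B}_{\underline{d'}}$. Relation (R1) reads $(sv_g)^N=u'$, i.e.\ $u'=s^Nu$. Relation (R3) is a one-line computation: $(v_x+tv_g)(sv_g)-q(sv_g)(v_x+tv_g)=s(v_xv_g-qv_gv_x)+(t-qt)sv_g^2=s(b+t-qt)v_g^2$ by (R3) in $\mathcal{B}_{\underline{d}}$, and comparison with $b'(sv_g)^2$ gives $b'=(b+t-qt)s^{-1}$. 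The heart of the matter is (R2), $(v_x+tv_g)^N=a'$. First I would note that $w:=v_x+tv_g$ satisfies $\rho(w)=1\otimes x+w\otimes g$, so that $1\otimes x$ and $w\otimes g$ $q$-commute in $\mathcal{B}_{\underline{d}}\otimes H_N^q$; \cref{qbinomiais} then gives $\rho(w^N)=(1\otimes x)^N+(w\otimes g)^N=w^N\otimes1$, so $w^N\in\mathcal{B}_{\underline{d}}^{coH}=R$ is automatically a scalar. To identify it, set $\hat v_x:=v_x-\tfrac{b}{1-q}v_g$; a direct check using (R3) shows $\hat v_x v_g=qv_g\hat v_x$, so $\hat v_x$ and $v_g$ $q$-commute. (Here $1-q$ is invertible because $\prod_{j=1}^{N-1}(1-q^j)=N$, so under the hypothesis $N\in R^\times$ each factor is a unit; in general one obtains the identity below by specializing it from the universal domain $\mathbb{Z}[\zeta_N][u^{\pm1},a,b,t]$.) Writing $v_x+tv_g=\hat v_x+(t+\tfrac{b}{1-q})v_g$ and $v_x=\hat v_x+\tfrac{b}{1-q}v_g$ and applying \cref{qbinomiais} to each, the unknown $\hat v_x^N$ cancels and
\[
(v_x+tv_g)^N=a+\Bigl(\bigl(t+\tfrac{b}{1-q}\bigr)^N-\bigl(\tfrac{b}{1-q}\bigr)^N\Bigr)u.
\]
Factoring the bracket by $X^N-c^N=\prod_{j=0}^{N-1}(X-q^jc)$ (valid since $q$ is a primitive $N$-th root of unity), with $X=t+\tfrac{b}{1-q}$ and $c=\tfrac{b}{1-q}$, the $j=0$ factor equals $t$ and the rest collapse to $t\prod_{j=1}^{N-1}\bigl(t+\tfrac{1-q^j}{1-q}\,b\bigr)u$, where $\tfrac{1-q^j}{1-q}=1+q+\cdots+q^{j-1}$; this is the scalar $a'$ displayed in \eqref{isocond}, so (R2) is exactly the middle equation there.

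Once the three equations \eqref{isocond} hold, $F$ is an algebra map by construction; it is a comodule map because it respects the coactions on the generators (this is the computation of part (1) read in reverse), and it is bijective because it sends the basis $\{(v_g')^m(v_x')^n\}$ to the unitriangular family $\{s^mv_g^mv_x^n+(\text{lower }v_x\text{-degree})\}$ with unit diagonal entries $s^m$, equivalently because the assignment with data $(s^{-1},-ts^{-1})$ is a two-sided inverse of the same type. The main obstacle is precisely the power computation for (R2): a naive binomial expansion is hopeless because $v_x$ and $v_g$ do not $q$-commute, and the decisive device is the scalar shift $v_x\mapsto v_x-\tfrac{b}{1-q}v_g$, which annihilates the inhomogeneous term in (R3) and reduces everything to two clean applications of \cref{qbinomiais}.
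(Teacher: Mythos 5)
The paper supplies no proof of this proposition at all --- it is imported verbatim from Masuoka \cite[Lemma 2.19]{masuoka} --- so there is nothing internal to compare your argument against; judged on its own, your proof is essentially correct and complete. Part (1) is sound: your coaction formula $\rho(v_g^mv_x^n)=\sum_i\binom{n}{i}_q q^{-mi}v_g^mv_x^{n-i}\otimes x^ig^{m+n-i}$ checks out, the top-degree argument works because the top $q$-binomial $\binom{n_0}{n_0}_q=1$ is a unit even though intermediate $q$-binomials may vanish, and $s\in R^\times$ follows from $s^Nu=u'$ alone. (Incidentally, this is the same coefficient-matching technique the paper itself uses to prove \Cref{cara dos hom}.) In part (2), the reduction to checking (R1)--(R3) on the images is the right universal-property argument, (R1) and (R3) are verified correctly, and your treatment of (R2) is the genuinely nontrivial contribution: the observation that $\rho(w)=1\otimes x+w\otimes g$ forces $w^N\in\mathcal{B}_{\underline{d}}^{coH}=R$, the shift $\hat v_x=v_x-\tfrac{b}{1-q}v_g$ (which does $q$-commute with $v_g$, as a one-line check with (R3) confirms), and the factorization $X^N-c^N=\prod_{j=0}^{N-1}(X-q^jc)$ pulled back along $\mathbb{Z}[\zeta_N]\to R$ are all valid. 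Your caveat about invertibility of $1-q$ is handled correctly both ways: $\prod_{j=1}^{N-1}(1-q^j)=N$ (evaluate $1+x+\cdots+x^{N-1}=\prod_{j=1}^{N-1}(x-q^j)$ at $x=1$), and alternatively the identity can be proved over a localization of $\mathbb{Z}[\zeta_N][u^{\pm1},a,b,t]$ inverting $1-q$ and then specialized, using that $\mathcal{B}_{\underline{d}}$ is a free module (\Cref{cua}) so that base change is injective.

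One substantive point deserves emphasis: your computation yields
\[
a'=a+t\prod_{j=1}^{N-1}\bigl(t+(1+q+\cdots+q^{j-1})b\bigr)u,
\]
whose factors are \emph{partial geometric sums}, whereas \eqref{isocond} as printed has factors $t+(1+q^j)b$. These genuinely disagree, and your version is the correct one for the relations (R1)--(R3) as stated: for $N=2$, $q=-1$, direct expansion gives $(v_x+tv_g)^2=a+t(t+b)u$ (since $v_xv_g+v_gv_x=bu$), matching your formula, while the printed display gives $a+t^2u$ because $t+b+qb=t$ there. So the middle equation of \eqref{isocond} is a garbled transcription of the partial-sum product; note the two expressions coincide when $b=0$, which is the only case used after \Cref{cor b=0}, so nothing downstream in the paper is affected. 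You also silently (and correctly) repaired the direction of $F$ in item 2, which as stated is inconsistent with \eqref{isoF}. Your closing remarks on bijectivity (the explicit inverse with data $(s^{-1},-ts^{-1})$, or unitriangularity on the basis) and on the comodule property (algebra maps agreeing with the coactions on generators) are both adequate.
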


	Whenever $1-q \in R^{\times}$, we can further reduce the cleft data from 3 to 2 parameters and simplify \Cref{criterio}.

	\begin{corollary} \label{cor b=0}
		Let $\mathcal{B}_{(u,a,b)}$ be an $H_N^q$-cleft extension of $R$. If $1-q \in R^{\times}$ then there exists $a' \in R$ such that $\mathcal{B}_{(u,a,b)} \cong \mathcal{B}_{(u,a',0)}$.
	\end{corollary}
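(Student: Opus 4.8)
The plan is to invoke part (2) of \Cref{criterio}, using it as a construction device rather than merely a test. That proposition says that for \emph{any} pair $(s,t) \in R^{\times} \times R$ we may \emph{define} a target cleft data $\underline{d'} = (u',a',b')$ via the formulas \cref{isocond} applied to the source data $(u,a,b)$, and that the associated map $F$ is automatically an $H_N^q$-comodule algebra isomorphism. Thus it suffices to choose $(s,t)$ so that the target has the form $(u,a',0)$; that is, so that the first parameter is preserved and the third is killed.

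First I would solve the constraint $b' = 0$. Since $b' = (b + t - qt)s^{-1}$ and $s^{-1}$ is a unit, this is equivalent to $b + t(1-q) = 0$. Because $1 - q \in R^{\times}$ by hypothesis, this equation has the unique solution $t = -b(1-q)^{-1}$ in $R$; this is exactly where the assumption on $R$ enters. To preserve the first parameter I would simply take $s = 1$, so that $u' = s^N u = u$. With these choices one checks directly that $b' = (b + t(1-q)) \cdot 1 = 0$, and then one merely \emph{sets} $a'$ equal to the value $a + t \prod_{k=1}^{N-1}(t + b + q^k b)\, u$ prescribed by the middle line of \cref{isocond}. By construction the triple built from $(s,t) = (1, -b(1-q)^{-1})$ satisfies all three equations of \cref{isocond} relating the source data $(u,a,b)$ to the target data $(u,a',0)$, so \Cref{criterio}(2) delivers an $H_N^q$-comodule algebra isomorphism $F \colon \mathcal{B}_{(u,a,b)} \to \mathcal{B}_{(u,a',0)}$, which is the desired conclusion.

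I do not anticipate a genuine obstacle here. The only real point is the observation that among the three constraints in \cref{isocond} the one governing $b'$ decouples from the others and becomes a \emph{linear} equation in $t$ with leading coefficient $1-q$; once that coefficient is invertible, $t$ is determined, $s$ is free to normalize $u$, and $a'$ is simply read off rather than imposed as a further condition.
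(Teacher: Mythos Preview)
Your proof is correct and follows exactly the same route as the paper: choose $s=1$ and $t=-b(1-q)^{-1}$, verify that the first and third equations of \cref{isocond} are then satisfied, and read off $a'$ from the second one. The only difference is that you spell out in more detail why this use of \Cref{criterio}(2) is legitimate.
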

	\begin{proof}
		By \Cref{criterio} it is enough to find a pair $(s,t) \in R^{\times}\times R$ in order to satisfy the equations \eqref{isocond}. Taking $s=1$ and $t=-\frac{b}{1-q}$, the first and the third equations are obviously satisfied and the value of $a'$ is then obtained from the second one.
	\end{proof}
	
	In view of this corollary, from now on we denote $\mathcal{B}_{(u,a,0)}$ simply by $\mathcal{B}_{(u,a)}$. \Cref{criterio} and \Cref{cor b=0} further provide a means to distinguish such 
cleft extensions by straightforward direct computation:
	
	\begin{proposition}\label{novocriterio}
			Let $\mathcal{B}_{(u,a)}$ and $\mathcal{B}_{(u',a')}$ be $H$-cleft extensions of $R$. Then $\mathcal{B}_{(u,a)}\cong \mathcal{B}_{(u',a')}$ if and only if $a'=a$ and there exists $s \in R^{\times}$ such that $u'=s^Nu$.
	\end{proposition}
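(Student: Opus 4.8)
The plan is to deduce this directly from the classification criterion in \Cref{criterio}, specialized to the case $b = b' = 0$ that underlies the two-parameter notation $\mathcal{B}_{(u,a)}$. Recall that this notation is only available when $1 - q \in R^{\times}$ (by \Cref{cor b=0}), and this invertibility will be the pivot of the entire argument. The idea is simply to substitute $b = b' = 0$ into the three equations \eqref{isocond} and read off what the pair $(s,t)$ must satisfy.

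For the ``if'' direction I would assume $a' = a$ and $u' = s^{N}u$ for some $s \in R^{\times}$, set $t = 0$, and verify \eqref{isocond} with $b = b' = 0$. The first equation is exactly $u' = s^{N}u$; the third reads $b' = t(1-q)s^{-1} = 0$; and in the second every factor $(t + b + q^{i}b)$ collapses to $t = 0$, so that $a' = a + 0 = a$. All three hold, so \Cref{criterio}(2) produces a well-defined $H_N^q$-comodule algebra isomorphism realizing $\mathcal{B}_{(u,a)} \cong \mathcal{B}_{(u',a')}$.

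For the ``only if'' direction I would start from an $H_N^q$-comodule algebra isomorphism $F$ and invoke \Cref{criterio}(1) to obtain a pair $(s,t) \in R^{\times} \times R$ for which $F$ has the form \eqref{isoF}; since $F$ is a genuine algebra map it is in particular well-defined, so by \Cref{criterio}(2) the equations \eqref{isocond} hold with $b = b' = 0$. The crucial step is the third equation, $0 = b' = t(1-q)s^{-1}$: because both $1 - q$ and $s^{-1}$ are units, this forces $t = 0$. With $t = 0$ in hand the second equation yields $a' = a$ (the product term, which in general reduces to $t^{N}u$ when $b=0$, now vanishes) and the first gives $u' = s^{N}u$, as required.

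The only genuinely delicate point, and the one worth flagging, is precisely this forced vanishing of $t$: it rests entirely on $1 - q$ being invertible, without which the third equation would no longer pin down $t$ and the extra degree of freedom could alter $a'$ through the second equation. Everything else is routine bookkeeping, namely substituting $b = 0$ into the product in \eqref{isocond} and matching the two surviving equations, so I expect no further obstacle.
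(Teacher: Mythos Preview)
Your proposal is correct and matches the paper's approach exactly: the paper states that the result follows from \Cref{criterio} and \Cref{cor b=0} ``by straightforward direct computation,'' and what you have written is precisely that computation, including the key observation that the third equation of \eqref{isocond} with $b=b'=0$ forces $t=0$ because $1-q\in R^{\times}$.
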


\section{Polynomial identities}
	
	Drawing from Kassel \cite{kassel}, let $H$ be a Hopf algebra over $R$ and assume that $H$ is a free $R$-module. For each ${i=1,2,\ldots}$ let $Z_i^H$ be a copy $H$ as a free $R$-module and denote by $Z_i^h \ (h \in H)$ the elements of $Z_i^H$ (called $Z$-symbols). Define $Z_H=\bigoplus_{i\geqslant 1} Z_i^H$ and take the tensor algebra
	\[
	T=T(Z_H)=T\left( \displaystyle \bigoplus_{i\geqslant 1} Z_i^H \right).
	\]
	$T$ is a (right) $H$-comodule algebra with coaction given by
	\[
	\delta(Z_i^h)=Z_i^{h_1}\otimes h_2.
	\]
It is useful to recall that we are using Sweedler's summation
notation for the coproduct, namely that $h_1\otimes h_2=\Delta(h)=\sum_{i}h_{1i}\otimes h_{2i}$.

	Given a linear basis $\{h_1, h_2, \ldots\}$ for $H$, it is easy to see that the tensor algebra $T$ is isomorphic to the free associative unital algebra given by the indeterminates $\bigcup_{i\geq 1}\{Z_i^{h_j}\mid j=1,2,\ldots\}$. In view of this
	remark it should be clear that the following definition generalizes both the ordinary (case $H=R$) and the $G$-graded (case $H=RG$) polynomial identities:
	\begin{definition}\cite[Def. 2.1]{kassel}
		An element $P \in T$ is a polynomial $H$-identity for a right $H$-comodule algebra $B$ if $f(P)=0$ for all right $H$-comodule algebra maps $f \colon T \longrightarrow B$.
	\end{definition}
	
	We denote the set of the polynomial $H$-identities for a right  $H$-comodule algebra $B$ by $\Id_H(B)$. Then
	\[
	\Id_H(B)=\displaystyle\bigcap_{f} \ker f,
	\]
	where $f$ runs over all right $H$-comodule algebra maps $T \longrightarrow B.$
	
	For a Taft algebra $H_N^q$ fix a linear $R$-basis $\{g^mx^n, 0\leqslant m,n < N\}$ as before and denote $E=Z_1^1$, $G=Z_1^g$ and $X=Z_1^x$. Then
	\begin{alignat*}{2}
		\delta(E)&=E\otimes 1\\
		\delta(G)&=G\otimes g\\
		\delta(X)&=E\otimes x +X \otimes g.
	\end{alignat*}
	
	\begin{proposition} \label{cara dos hom}
		Let $H_N^q$ be the Taft algebra, $T$ the tensor algebra over $H_N^q$ and $\mathcal{B}_{(u,a)}$ an $H_N^q$-cleft extension of $R$. For each right $H_N^q$-comodule algebra map ${f \colon T \longrightarrow \mathcal{B}_{(u,a)}}$ there exist unique $\lambda, \mu, \xi \in R$ such that
		\begin{enumerate}
			\item[i)] $f (E)=\lambda$,
			\item[ii)] $f (G)=\mu v_g$,
			\item[iii)] $f (X)=\lambda v_x+\xi v_g$.
		\end{enumerate}
	\end{proposition}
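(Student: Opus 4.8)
The plan is to exploit the fact that every cleft extension is, as a right $H_N^q$-comodule, isomorphic to $H_N^q$ itself, and then to read off the three images by contracting against the counit. By \Cref{cua}(3) the section $\varphi=\varphi_{(u,a)}\colon H_N^q\to\mathcal{B}_{(u,a)}$, $\varphi(g^mx^n)=v_g^mv_x^n$, is a right $H_N^q$-comodule map, and since it carries the $R$-basis $\{g^mx^n\}$ bijectively onto the basis $\{v_g^mv_x^n\}$ of \Cref{cua}(1), it is an \emph{isomorphism} of comodules; in particular $\varphi\otimes\id$ is bijective. Writing $\rho$ for the coaction of $\mathcal{B}_{(u,a)}$, the defining property of $\varphi$ reads $\rho\circ\varphi=(\varphi\otimes\id)\circ\Delta$, so any coaction identity holding in $\mathcal{B}_{(u,a)}$ can be pulled back, through the injective map $\varphi\otimes\id$, to the corresponding identity in $H_N^q$ phrased via its comultiplication $\Delta$.

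First I would treat $E$. Since $f$ is a comodule map and $\delta(E)=E\otimes 1$, we get $\rho(f(E))=f(E)\otimes 1$, i.e. $f(E)$ is a coinvariant; by \Cref{cua}(2) the coinvariants are exactly $R$, so $f(E)=\lambda$ for a unique $\lambda\in R$. Next, for $G$ the relation $\delta(G)=G\otimes g$ gives $\rho(f(G))=f(G)\otimes g$. Setting $h_G=\varphi^{-1}(f(G))$ and using injectivity of $\varphi\otimes\id$, this becomes $\Delta(h_G)=h_G\otimes g$ in $H_N^q\otimes H_N^q$. Applying the counit axiom $(\varepsilon\otimes\id)\circ\Delta=\id$ to this equation gives $h_G=\varepsilon(h_G)\,g$, hence $f(G)=\varphi(\mu g)=\mu v_g$ with $\mu=\varepsilon(h_G)$.

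The essential coupling occurs for $X$. From $\delta(X)=E\otimes x+X\otimes g$ and $f(E)=\lambda\cdot 1$ we obtain $\rho(f(X))=\lambda(1\otimes x)+f(X)\otimes g$. Pulling this back through $\varphi$, and using $\varphi(1)=1$, produces $\Delta(h_X)=\lambda(1\otimes x)+h_X\otimes g$ for $h_X=\varphi^{-1}(f(X))$. Applying $(\varepsilon\otimes\id)$ once more, with $\varepsilon(1)=\varepsilon(g)=1$, collapses the right-hand side to $\lambda x+\varepsilon(h_X)g$, whence $h_X=\lambda x+\xi g$ with $\xi=\varepsilon(h_X)$, and therefore $f(X)=\lambda v_x+\xi v_g$. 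Observe that the scalar multiplying $v_x$ is forced to be the very same $\lambda=f(E)$ of part i), exactly as claimed. Uniqueness of $\lambda,\mu,\xi$ is then immediate, since $1,v_g,v_x$ are distinct members of the free basis $\{v_g^mv_x^n\}$ and hence $R$-linearly independent.

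I expect the transport step to be the only real subtlety. The counit trick cannot be applied directly inside $\mathcal{B}_{(u,a)}$, because there the comodule counit axiom $(\id\otimes\varepsilon)\rho=\id$ only contracts the $H_N^q$-factor and returns a tautology; it is essential to move the three identities into the genuine coalgebra $H_N^q$ via the isomorphism $\varphi$, where contracting the \emph{left} tensor factor against $\varepsilon$ recovers the element. Once this is in place the argument is immediate and, notably, uses neither \Cref{qbinomiais} nor any invertibility of $N$. One could instead expand $f(G)$ and $f(X)$ in the basis and match the coefficients of $\rho$ term by term, but that route would force inverting the $q$-integers $[k]_q$ for $1\le k\le N-1$, bringing in the hypothesis $N\in R^\times$ through the identity $\prod_{k=1}^{N-1}(1-q^k)=N$; the comodule-isomorphism argument sidesteps this entirely.
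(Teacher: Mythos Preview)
Your argument is correct and takes a genuinely different route from the paper. The paper expands each of $f(E)$, $f(G)$, $f(X)$ in the free basis $\{v_g^mv_x^n\}$ of $\mathcal{B}_{(u,a)}$, computes both sides of $\rho\circ f=(f\otimes\id)\circ\delta$ explicitly (using \Cref{qbinomiais} to expand $\rho(v_x)^n$), and then matches coefficients in the basis of $\mathcal{B}_{(u,a)}\otimes H_N^q$ to force all but the indicated $\alpha_{m,n}$ to vanish. Your approach instead transports the comodule constraint through the $R$-linear comodule isomorphism $\varphi$ back to $H_N^q$ and reads off the answer via the counit axiom $(\varepsilon\otimes\id)\Delta=\id$; this is shorter, more conceptual, and makes the coupling $f(X)=f(E)v_x+\xi v_g$ appear transparently from $\Delta(x)=1\otimes x+x\otimes g$.

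One correction to your closing commentary: the claim that the coefficient-matching route ``would force inverting the $q$-integers $[k]_q$'' and hence implicitly require $N\in R^\times$ is not accurate. The paper carries out exactly that basis-expansion argument, and no invertibility of any $q$-binomial is used: the vanishing of the unwanted $\alpha_{m,n}$ is obtained purely by locating basis elements $v_g^mv_x^{n-l}\otimes g^{m+n-l}x^l$ that occur on one side of the equation but not the other (e.g.\ terms with a second tensor factor other than $1$, $g$, or $x$). So both proofs work at the same level of generality; your method is simply cleaner.
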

	\begin{proof}
		Fix a linear $R$-basis $\{v_g^mv_x^n\,|\, 0\leqslant m,n < N\}$  for $\mathcal{B}_{(u,a)}$ as in \Cref{cua}, in which case the coaction $\rho \colon \mathcal{B}_{(u,a)}\longrightarrow \mathcal{B}_{(u,a)} \otimes H_N^q$ is given by
		\begin{alignat*}{2}
			\rho(v_g)&=v_g\otimes g\\
			\rho(v_x)&=1\otimes x +v_x \otimes g.
		\end{alignat*}
		 Then for each $P \in T$, there exist unique coeficients $\alpha_{m,n}^{P}\in R$ ($0\leqslant m,n < N$) such that
		\[
		f(P)=\displaystyle\sum_{m,n=0}^{N-1}\alpha_{m,n}^Pv_g^mv_x^n.
		\]
Since $f$ is an $H_N^q$-comodule map, namely ${\rho \circ f=(f \otimes \id) \circ \delta}$, we can compute the values
of the coefficients $\alpha_{m,n}^P$ when $P$ is one of $E$, $G$ and $X$ as follows. First, from \Cref{qbinomiais}, we have that
		\begin{alignat*}{2}
			\rho \circ f(P)&=\displaystyle\sum_{m,n=0}^{N-1}\alpha_{m,n}^P\rho(v_g)^m\rho(v_x)^n\\
			&=\displaystyle\sum_{m,n=0}^{N-1}\alpha_{m,n}^P(v_g\otimes g)^m(1\otimes x+v_x\otimes g)^n\\
			&=\displaystyle\sum_{m,n=0}^{N-1}\alpha_{m,n}^P\displaystyle\sum_{l=0}^{n}\binom{n}{l}_qv_g^mv_x^{n-l}\otimes g^{m+n-l}x^l,
		\end{alignat*}
which is a tensor expression in terms of an $R$-basis of
$\mathcal{B}_{(u,a)}\otimes H_N^q$.
		
Now, since $(f\otimes \id)\circ\delta(P)=\rho\circ f(P)$,
we have:	\begin{enumerate}
			\item[i)] For $P=E$,
			\begin{alignat*}{2}
				(f \otimes \id) \circ 	\delta(E)&=f(E)\otimes 1\\
				&=\left(\displaystyle\sum_{m,n=0}^{N-1}\alpha_{m,n}^Ev_g^mv_x^n\right)\otimes 1\\
				&=\left( \alpha_{0,0}^E+ 	\displaystyle\sum_{m=1}^{N-1}\alpha_{m,N-m}^Ev_g^mv_x^{N-m}+  \displaystyle\sum_{\substack{m,n=0\\ m+n\neq 0\\ m+n\neq N}}^{N-1} \alpha_{m,n}^E v_g^mv_x^n \right) \otimes 1
			\end{alignat*}
			
			and
			\begin{alignat*}{2}
				\rho \circ f(E)&=\left( \alpha_{0,0}^E+ 	\displaystyle\sum_{m=1}^{N-1}\alpha_{m,N-m}^Ev_g^mv_x^{N-m}\right) \otimes 1+  \displaystyle\sum_{m=1}^{N-1} \alpha_{m,N-m}^Ev_g^m \otimes g^m+\\
				&+\displaystyle\sum_{m=1}^{N-1} \alpha_{m,N-m}^E \displaystyle\sum_{l=1}^{N-m-1}   \binom{N-m}{l}_qv_g^mv_x^{N-m-l} \otimes g^{N-l}x^l+\\
				&+\displaystyle\sum_{\substack{m,n=0\\ m+n\neq 0\\ m+n\neq N}}^{N-1} \alpha_{m,n}^E \displaystyle\sum_{l=0}^{n}   \binom{n}{l}_qv_g^mv_x^{n-l} \otimes g^{m+n-l}x^l.
			\end{alignat*}
			Comparing coefficients in these equations we get $\alpha_{m,n}^E=0$ whenever ${(m,n) \neq (0,0)}$ and $\alpha_{0,0}^E$ is the only remaining quantity, so we can take $\lambda:=\alpha_{0,0}^E$ and obtain $f(E)=\lambda$.
			\item[ii)] For $P=G$,
			\begin{alignat*}{2}
				(f \otimes \id) \circ \delta(G)&=f(G)\otimes g\\ &=\left(\displaystyle\sum_{m,n=0}^{N-1}\alpha_{m,n}^Gv_g^mv_x^n\right)\otimes g\\
				&=\left( \alpha_{1,0}^Gv_g+ \alpha_{0,1}^Gv_x+	\displaystyle\sum_{m=2}^{N-1}\alpha_{m,N+1-m}^Gv_g^mv_x^{N+1-m}\right)\otimes g+ \\ &+\displaystyle\sum_{\substack{m,n=0\\ m+n\neq 1\\ m+n\neq N+1}}^{N-1} \alpha_{m,n}^G v_g^mv_x^n \otimes g
			\end{alignat*}
			and
			\begin{alignat*}{2}
				\rho \circ f(G)&=\left( \alpha_{1,0}^Gv_g+ \alpha_{0,1}^Gv_x+	\displaystyle\sum_{m=2}^{N-1}\alpha_{m,N+1-m}^Gv_g^mv_x^{N+1-m} \right) \otimes g+\\
				&+\alpha_{0,1}^G\otimes x+\displaystyle\sum_{m=2}^{N-1} \alpha_{m,N+1-m}^G v_g^m \otimes g^mx^{N+1-m}+\\
				&+\displaystyle\sum_{m=2}^{N-1} \alpha_{m,N+1-m}^G \displaystyle\sum_{l=1}^{N+1-m}   \binom{N+1-m}{l}_qv_g^mv_x^{N+1-m-l} \otimes g^{N+1-l}x^l+\\
				&+\displaystyle\sum_{\substack{m,n=0\\ 1\neq m+n\neq N+1}}^{N-1} \alpha_{m,n}^G \displaystyle\sum_{l=0}^{n}   \binom{n}{l}_qv_g^mv_x^{n-l} \otimes g^{m+n-l}x^l.
			\end{alignat*}
			Again, comparing coefficients in these equations, we get that $\alpha_{m,n}^G=0$ whenever ${(m,n) \neq (1,0)}$ and that only $\alpha_{1,0}^G$ remains undetermined. Taking $\mu:=\alpha_{1,0}^G$, we get $f(G)=\mu v_g$.
			\item[iii)] For $P=X$, since $\alpha_{m,n}^E=0$ whenever $(m,n)\neq (0,0)$,
			\begin{alignat*}{2}
				(f \otimes \id) \circ \delta(X)&= f(E)\otimes x +f(X)\otimes g\\
				&=\alpha_{0,0}^E\otimes x+\displaystyle\sum_{m,n=0}^{N-1}\alpha_{m,n}^Xv_g^mv_x^n\otimes g\\
				&=\alpha_{0,0}^E\otimes x +  \alpha_{1,0}^Xv_g \otimes g+ \alpha_{0,1}^Xv_x \otimes g+ \\ &	+\displaystyle\sum_{m=2}^{N-1}\alpha_{m,N+1-m}^Xv_g^mv_x^{N+1-m} \otimes g +\displaystyle\sum_{\substack{m,n=0\\ m+n\neq 1\\ m+n\neq N+1}}^{N-1} \alpha_{m,n}^X v_g^mv_x^n \otimes g
			\end{alignat*}
			and
			\begin{alignat*}{2}
				\rho \circ f(X)&=\left( \alpha_{1,0}^Xv_g+ \alpha_{0,1}^Xv_x+	\displaystyle\sum_{m=2}^{N-1}\alpha_{m,N+1-m}^Xv_g^mv_x^{N+1-m}+ \right) \otimes g+\\
				&+\left(\alpha_{0,1}^X+\displaystyle\sum_{m=2}^{N-1} \alpha_{m,N+1-m}^X \binom{N+1-m}{1}_q v_g^mv_x^{N-m} \right) \otimes x+\\
				&+\displaystyle\sum_{m=2}^{N-1} \alpha_{m,N+1-m}^X v_g^m \otimes g^mx^{N+1-m}+\\
				&+\displaystyle\sum_{m=2}^{N-1} \alpha_{m,N+1-m}^X \displaystyle\sum_{l=2}^{N+1-m-l}\binom{N+1-m}{l}_qv_g^mv_x^{N+1-m-l} \otimes g^{N+1-l}x^l+\\
				&+\displaystyle\sum_{\substack{m,n=0\\ 1\neq m+n\neq N+1}}^{N-1} \alpha_{m,n}^X \displaystyle\sum_{l=0}^{n} \binom{n}{l}_qv_g^mv_x^{n-l} \otimes g^{m+n-l}x^l.
			\end{alignat*}
			This time, comparing coefficients gives us that $\alpha_{1,0}^X$ is free, ${\alpha_{0,1}^X=\alpha_{0,0}^E}=\lambda$ and $\alpha_{m,n}^X=0$ whenever $(m,n) \neq (1,0)$ and $(m,n)\neq (0,1)$. Taking ${\xi: =\alpha_{1,0}^X}$, we get $f(X)=\lambda v_x+\xi v_g$.
		\end{enumerate}
	\end{proof}
	
	Obviously, the previous proposition remains true in case the $E$, $G$ and $X$ are redefined as $E_i=Z_i^1$, $G_i=Z_i^g$ and $X_i=Z_i^x$, for all $i\geq 1$.

\section{The Isomorphism Question}
	
	Back to our main question: For the Taft algebra $H_N^q$, given $B$ and $B'$ $H_N^q$-cleft extensions over $R$, when $\Id_{H_N^q}(B)=\Id_{H_N^q}(B')$ implies $B \cong B'$ as $H_N^q$-comodule algebras?
	
	In view of \Cref{cleft-caract}, \Cref{cor b=0} and \Cref{novocriterio}, we can rearrange this question as follows: When $\Id_{H_N^q}(\mathcal{B}_{(u,a)})=\Id_{H_N^q}(\mathcal{B}_{(u',a')})$ implies that $a'=a$ and there exists $s \in R^{\times} $ such that $u'=s^Nu$? We analyze this question dividing it naturally in two parts. For each part we dedicate one subsection. 
	
	\subsection{The $a'=a$ question}
		
	The purpose of this subsection is to prove that in order to obtain that $\Id_{H_N^q}(\mathcal{B}_{(u,a)})=\Id_{H_N^q}(\mathcal{B}_{(u',a')})$ implies $a'=a$ is enough to ask that $1-q \in R^{\times}$. Note that this condition is the same of \Cref{cor b=0}. In other words, there is no loss in asking that.
	
	In order to get that we first select select a suitable polynomial $H_N^q$-identity:
	
		\begin{lemma} \label{pa}
			Let $\mathcal{B}_{(u,a)}$ be an $H_N^q$-cleft extension over $R$.	Then 
			\[
			\mathcal{P}_a=(XG-q GX)^N-(1-q)^NG^NX^N+(1-q)^NaE^NG^N
			\]
			is a polynomial $H_N^q$-identity for $\mathcal{B}_{(u,a)}$.
		\end{lemma}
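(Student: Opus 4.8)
The plan is to verify the identity directly on an arbitrary test map. By \Cref{cara dos hom}, every right $H_N^q$-comodule algebra map $f\colon T\to\mathcal{B}_{(u,a)}$ is completely determined by three scalars $\lambda,\mu,\xi\in R$ through $f(E)=\lambda$, $f(G)=\mu v_g$ and $f(X)=\lambda v_x+\xi v_g$. Since $\mathcal{P}_a$ lies in $T$ and $\Id_{H_N^q}(\mathcal{B}_{(u,a)})=\bigcap_f\ker f$, it suffices to show that $f(\mathcal{P}_a)=0$ for every such triple $(\lambda,\mu,\xi)$. I would therefore evaluate each of the three summands of $\mathcal{P}_a$ separately and check that they cancel.

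First I would treat the factor $XG-qGX$. Applying $f$ and expanding gives $f(XG-qGX)=\lambda\mu(v_xv_g-qv_gv_x)+(1-q)\mu\xi v_g^2$. Here the decisive simplification is that in $\mathcal{B}_{(u,a)}$ we have $b=0$, so relation (R3) reads $v_xv_g=qv_gv_x$ and the first term disappears. Thus $f(XG-qGX)=(1-q)\mu\xi v_g^2$, and raising this to the $N$-th power together with (R1), i.e.\ $v_g^N=u$, yields $f\big((XG-qGX)^N\big)=(1-q)^N\mu^N\xi^N u^2$.

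The step I expect to carry the real content is the computation of $f(X^N)=(\lambda v_x+\xi v_g)^N$, and this is exactly where \Cref{qbinomiais} intervenes. Setting $z=\lambda v_x$ and $w=\xi v_g$ one checks from $v_xv_g=qv_gv_x$ that $zw=qwz$, so the ``in particular'' case of \Cref{qbinomiais} gives $(z+w)^N=z^N+w^N$; all the mixed $q$-binomial terms vanish because $q$ is a root of the $N$-th cyclotomic polynomial. Using (R1) and (R2), namely $v_g^N=u$ and $v_x^N=a$, this collapses to $f(X^N)=\lambda^N a+\xi^N u$. Consequently $f(G^NX^N)=\mu^N u(\lambda^N a+\xi^N u)$ and $f(E^NG^N)=\lambda^N\mu^N u$.

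Finally I would assemble the three pieces. Combining the above,
\[
f(\mathcal{P}_a)=(1-q)^N\mu^N\xi^N u^2-(1-q)^N\mu^N u(\lambda^N a+\xi^N u)+(1-q)^N a\lambda^N\mu^N u,
\]
and the $\xi^N$-terms cancel while the two $a$-terms cancel against each other, leaving $f(\mathcal{P}_a)=0$. Since $f$ was arbitrary, $\mathcal{P}_a\in\Id_{H_N^q}(\mathcal{B}_{(u,a)})$. The only genuinely delicate point is the vanishing of the intermediate $q$-binomial coefficients in the expansion of $(\lambda v_x+\xi v_g)^N$; everything else is bookkeeping of the defining relations (R1)--(R3), together with the fact that the coefficients of $\mathcal{P}_a$ are tuned precisely so that the surviving contributions annihilate one another.
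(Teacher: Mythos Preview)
Your proof is correct and follows essentially the same route as the paper's: use \Cref{cara dos hom} to parametrize an arbitrary comodule algebra map by $(\lambda,\mu,\xi)$, then evaluate each of the three summands of $\mathcal{P}_a$ using the relations (R1)--(R3) together with \Cref{qbinomiais}, and observe that the results cancel. The paper's proof records exactly the same three intermediate values you obtain; your write-up simply makes explicit the steps (the vanishing of $v_xv_g-qv_gv_x$ from $b=0$, and the application of the $q$-binomial formula to $(\lambda v_x+\xi v_g)^N$) that the paper leaves implicit.
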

	\begin{proof}
		Let $f \colon T \longrightarrow \mathcal{B}_{(u,a)}$ an $H_N^q$-comodule algebra map. By \Cref{cara dos hom} and \Cref{qbinomiais} we have
		\begin{align*}
			f((XG-q GX)^N)&=(1-q)^{N}\mu^N\xi^Nu^{2}\\
			f((1-q)^NG^NX^N)&=(1-q)^N\mu^N(\xi^Nu^2+\lambda^Nua)\\
			f((1-q)^NaE^NG^N)&=(1-q)^Na\lambda^N\mu^Nu
		\end{align*}
		Therefore $f(\mathcal{P}_a)=0$.
	\end{proof}
		
		The polynomial $\mathcal{P}_a$ has been used by Kassel in \cite{kassel} to solve the same type of question when $R$ is an algebraically closed field. To this end, Kassel defines a map (called universal map) with the following property: given a polynomial $P$, $P$ is a polynomial $H$-identity if and only if $P$ is in the kernel of this universal map. However such a map doesn't work so well when $R$ is not an algebraically closed field. In this case, the universal map still can be defined and every polynomial in its kernel is a polynomial $H$-identity. However it is possible to exists polynomial $H$-identities that are not in the kernel of the universal map. In our case what happens is that $\mathcal{P}_a$ is  actually in the kernel of that universal map, but  a further polynomial $H$-identity  \eqref{G-polynomial} will not. Therefore  we opted for use \Cref{cara dos hom} in both polynomials.
		
	\begin{proposition} \label{a'a}
		Let $\mathcal{B}_{(u,a)}$ and $\mathcal{B}_{(u',a')}$ be two $H_N^q$-cleft extensions over $R$. If $1-q \in R^{\times}$ and
		\[
		\Id_{H_N^q}(\mathcal{B}_{(u,a)})=\Id_{H_N^q}(\mathcal{B}_{(u',a')}),
		\]
		then $a'=a$.
	\end{proposition}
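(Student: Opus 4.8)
The plan is to leverage the polynomial $H_N^q$-identity $\mathcal{P}_a$ from \Cref{pa}, exploiting the fact that it belongs to $\Id_{H_N^q}(\mathcal{B}_{(u,a)})$ but encodes the parameter $a$ explicitly. The key observation is that $\mathcal{P}_a - \mathcal{P}_{a'} = (1-q)^N(a-a')E^NG^N$, which is a scalar multiple of the single monomial $E^NG^N$. If the two identity sets coincide, then $\mathcal{P}_a \in \Id_{H_N^q}(\mathcal{B}_{(u',a')})$, and since $\mathcal{P}_{a'} \in \Id_{H_N^q}(\mathcal{B}_{(u',a')})$ as well (again by \Cref{pa}, applied to the primed extension), subtracting gives that $(1-q)^N(a-a')E^NG^N$ lies in $\Id_{H_N^q}(\mathcal{B}_{(u',a')})$.

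First I would reduce the whole problem to showing that $E^NG^N$ is \emph{not} a polynomial $H_N^q$-identity for $\mathcal{B}_{(u',a')}$ — more precisely, that there exists a right $H_N^q$-comodule algebra map $f$ with $f(E^NG^N) \neq 0$. By \Cref{cara dos hom}, any such map is determined by scalars $\lambda, \mu, \xi \in R$, and a direct computation gives $f(E^NG^N) = \lambda^N \mu^N u'$. Choosing $\lambda = \mu = 1$ and $\xi = 0$ yields $f(E^NG^N) = u'$, which is a unit since $u' \in R^\times$; in particular it is nonzero. This shows $E^NG^N \notin \Id_{H_N^q}(\mathcal{B}_{(u',a')})$.

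Combining the two steps: we have $(1-q)^N(a-a')E^NG^N \in \Id_{H_N^q}(\mathcal{B}_{(u',a')})$, so evaluating at the map $f$ above forces $(1-q)^N(a-a')\,u' = 0$ in $R$. Since $1-q \in R^\times$ by hypothesis and $u' \in R^\times$ by the definition of cleft data, both $(1-q)^N$ and $u'$ are units, and we may cancel them to conclude $a - a' = 0$, i.e.\ $a = a'$.

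\textbf{The main subtlety} is ensuring the cancellation is legitimate over a general commutative ring rather than a field: this is exactly where the hypothesis $1-q \in R^\times$ earns its keep, guaranteeing $(1-q)^N$ is invertible, while the unit condition $u' \in R^\times$ baked into the cleft data handles the other factor. There is no genuine obstacle beyond verifying the evaluation $f(E^NG^N) = \lambda^N\mu^N u'$, which follows routinely from \Cref{cara dos hom}: one substitutes $f(E) = \lambda$, $f(G) = \mu v_g$, uses relation (R1) $v_g^N = u'$, and notes that the $E$-symbol contributes the scalar $\lambda$ commuting freely past everything. Everything else is formal manipulation inside the identity sets.
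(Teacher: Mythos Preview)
Your proof is correct and follows essentially the same route as the paper: form $\mathcal{P}_a-\mathcal{P}_{a'}=(1-q)^N(a-a')E^NG^N$, observe it lies in the common identity set, and evaluate at the comodule algebra map with $\lambda=\mu=1$, $\xi=0$ to cancel the units $(1-q)^N$ and $u'$. The only cosmetic differences are that the paper evaluates in $\mathcal{B}_{(u,a)}$ rather than $\mathcal{B}_{(u',a')}$ and names the test map via the section $\Gamma$ instead of via the parameters of \Cref{cara dos hom}; note also that \Cref{cara dos hom} as stated only gives one direction, so you should remark (as the paper implicitly does by invoking the section) that the choice $\lambda=\mu=1$, $\xi=0$ indeed defines a comodule algebra map, which is immediate since $T$ is free.
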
	
	\begin{proof}
		By \Cref{pa}, $\mathcal{P}_a$ is a polynomial $H_N^q$-identity for $\mathcal{B}_{(u,a)}$ and $\mathcal{P}_{a'}$ is a polynomial $H_N^q$-identity for $\mathcal{B}_{(u',a')}$. As the set of a polynomial $H_N^q$-identities coincide,  $\mathcal{P}_a-\mathcal{P}_{a'}=(1-q)^N(a-a')E^NG^N$ is a polynomial $H_N^q$-identity for $\mathcal{B}_{(u,a)}$.
		
		Then for any $H_N^q$-comodule algebra map $f \colon T \longrightarrow \mathcal{B}_{(u,a)}$, we have $f(\mathcal{P-P'})=0$. 
		In particular, the section $\varphi \colon H \longrightarrow \mathcal{B}_{(u,a)}$ given by $\gamma (g^mx^n)=v_g^mv_x^n$ (an $H_N^q$-comodule linear map) defines an $H_N^q$-comodule algebra map $\Gamma \colon T \longrightarrow \mathcal{B}_{(u,a)}$ such that $\Gamma(E)=\gamma(1)=1$ and $\Gamma(G)=\gamma(g)=v_g$.

		Therefore $0=\Gamma  (\mathcal{P-P'})=(1-q)^N(a-a')u$. 
		Since $	(1-q)^N$ and $u$ are both in $R^{\times}$, we obtain $a'=a$.
	\end{proof}	
		
	\subsection{The $u'=s^Nu$ question}
%	Unlike the previous subsection, it is not true that  $\Id_{H_N^q}(\mathcal{C}_{u,a})=\Id_{H_N^q}(\mathcal{C}_{u',a'})$ is enough to get $u'=s^Nu$ (for some $s \in R^{\times}$). A counterexample will be provided in the last section.
	After ensuring that ${\Id_{H_N^q}(\mathcal{B}_{(u,a)})=\Id_{H_N^q}(\mathcal{B}_{(u',a')})}$ implies $a'=a$ with no extra conditions (since $1-q \in R^{\times}$ was already a condition), now, we will give sufficient conditions to also obtain $u'=s^Nu$. 
	
	We will make use of two extra hypotheses: $R$ finite and $N \in R^{\times}$.
	
	Remember some general properties about finite rings:
	
	\begin{remark} Let $R$ be a finite commutative unital ring.  Then
	\begin{enumerate}[label=\arabic*)]
		%			\item $N \mid \alpha$.
		\item Local rings, indecomposable rings and rings with no proper idempotents are equivalent concepts.
		\item If $R$ has any of the equivalent properties in 1) then every element of $R$ is either a unit or nilpotent. That is, for any $ x \in R$, either $x^{\alpha}=1$ or $x^{\beta}=0$, where $\alpha$ is the exponent of $R^{\times}$ and $\beta$ is the index of nilpotency of $R$.
		\item  $R$ is Artinian. Therefore, by \cite[Prop. 22.1 and Prop. 2.22]{lam}, the unity $1_R$ decomposes into a sum of primitive pairwise orthogonal idempotents ${1_R=e_1+ \ldots+e_n}$ uniquely. In particular, for each $r \in R$, the decomposition ${r=re_1+\ldots +re_n}$ is unique. This induces a (block) decomposition $R=R_1\times \ldots \times R_n$ of local rings, where $R_i:=Re_i$ and $e_i$ is the identity element of $R_i$ for any $i=1, \ldots, n$.
		\item If $(N, \Char R)=1$ then $N \in R^{\times}$.
	\end{enumerate}
\end{remark}
	
	From now on, whenever $R$ is a finite ring,  fix $\alpha$ to be the exponent of $R^{\times}$ and $\beta$ to be the nilpotence index of $R$.
	
	If  $N \in R^{\times}$ next lemma bring some extra properties for a finite ring.
	
	\begin{lemma} \label{lemma}
		Let $N\geqslant 2$ and suppose there exists $q$ a $N$-th root of the cyclotomic polynomial in $R$. If $R$ is finite  and $N \in R^{\times}$. Then:
		\begin{enumerate}
			\item $(N, \Char R)=1$,
			\item $N=o(q)$. In particular $N \mid \alpha$,
			\item $1-q \in R^{\times}$.
		\end{enumerate}
	\end{lemma}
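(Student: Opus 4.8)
The plan is to deduce all three statements from elementary properties of the $N$-th cyclotomic polynomial $\Phi_N$ together with the hypothesis $N\in R^{\times}$, handling them in order. For (1) I would argue by contradiction: since $R$ is finite, $m=\Char R$ is the additive order of $1_R$, and if a prime $p$ divided both $N$ and $m$, then $N\cdot 1_R=(p\cdot 1_R)\bigl((N/p)\cdot 1_R\bigr)$ would exhibit $p\cdot 1_R$ as a factor of the unit $N\cdot 1_R$, hence a unit, while $(p\cdot 1_R)\bigl((m/p)\cdot 1_R\bigr)=m\cdot 1_R=0$ with $(m/p)\cdot 1_R\neq 0$ would make it a zero divisor---a contradiction. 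Hence no such $p$ exists and $(N,\Char R)=1$.

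The engine for (2) and (3) is the integer identity $X^N-1=\Phi_N(X)\,\psi(X)$ with $\psi(X)=\prod_{d\mid N,\,d\neq N}\Phi_d(X)\in\mathbb{Z}[X]$. Differentiating and evaluating at $q$---where $\Phi_N(q)=0$---gives $Nq^{N-1}=\Phi_N'(q)\,\psi(q)$ in $R$; since $q^N=1$ makes $q$ a unit (with inverse $q^{N-1}$) and $N\in R^{\times}$ by hypothesis, the left-hand side is a unit, so $\psi(q)\in R^{\times}$. Part (3) is then immediate: because $N\geqslant 2$ the factor $\Phi_1(X)=X-1$ occurs in $\psi$, so $q-1$ divides the unit $\psi(q)$ and is itself a unit, whence $1-q\in R^{\times}$. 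For part (2), $q^N=1$ gives $o(q)\mid N$; were $o(q)=d_0<N$, then $0=q^{d_0}-1=\prod_{d\mid d_0}\Phi_d(q)$ would be a subproduct of $\psi(q)$ (each such $d$ being a proper divisor of $N$), forcing $\psi(q)=0$ and contradicting $\psi(q)\in R^{\times}$; thus $o(q)=N$, and since $q\in R^{\times}$ its order divides the exponent $\alpha$ of $R^{\times}$, giving $N\mid\alpha$.

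The one genuinely substantive step is showing $\psi(q)\in R^{\times}$, and this is exactly where the hypothesis $N\in R^{\times}$ is indispensable, entering through the derivative of $X^N-1$; once it is in hand, parts (2) and (3) are little more than bookkeeping. The only remaining thing to verify is that the cyclotomic factorization and its formal derivative, being identities in $\mathbb{Z}[X]$, persist after applying the canonical ring map $\mathbb{Z}\to R$ and substituting $X=q$---which is automatic---so I anticipate no further obstacle.
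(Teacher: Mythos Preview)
Your proof is correct, and for parts (2) and (3) it takes a genuinely different route from the paper. The paper treats (2) by passing to a residue field $R/\mathfrak{m}$ and arguing that $\bar q$ is a \emph{simple} root of $X^N-1$ there (using $N\in R^{\times}$ via the derivative), so that the two vanishing factors $X^m-1$ and $\Phi_N$ must coincide; for (3) it invokes the block decomposition of the finite ring into local pieces and then cites an external separability result of Janusz to conclude that $1-q$ is a unit in each block. Your argument stays entirely inside $R$: the single observation that $Nq^{N-1}=\Phi_N'(q)\,\psi(q)$ forces $\psi(q)\in R^{\times}$ simultaneously yields $1-q\in R^{\times}$ (since $X-1\mid\psi$) and rules out $o(q)<N$ (since $X^{d_0}-1\mid\psi$ would make $\psi(q)=0$). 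This is more elementary and self-contained---no residue fields, no structure theory of finite rings, no external citation---and in fact your arguments for $o(q)=N$ and $1-q\in R^{\times}$ never use the finiteness of $R$ at all; finiteness enters only to make $\Char R>0$ in (1) and to give meaning to $\alpha$ in the ``$N\mid\alpha$'' clause. For part (1) the two arguments are essentially the same.
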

	\begin{proof}$ $
		\begin{enumerate}
			\item Since $R$ is finite, $\Char R>0$. Let $(N, \Char R)=d>0$. Therefore there exists $x,y>0$ such that $N=dx$ and $\Char R=dy$. $N \in R^{\times}$ implies $d \in R^{\times}$. On the other hand, $ay=0$ in $R$. Since $a \in R^{\times}$, $y=0$ in $R$. Thus $\Char R \mid y$. Since $\Char R\neq 0$, $y=\Char R >0$ and therefore $a=1$.
			\item Let $m:=o(q)$. Given $\mathfrak{m}$ a maximal ideal of $R$, define $\mathbb{F}=R/\mathfrak{m}$ (the field with elements $\overline{r}:=r+\mathfrak{m}, r \in R$). Take the polynomial $f(x)=x^N-1$ in $\mathbb{F}[x]$.  Since $q^N=1$, we obtain $\overline{q}^N=1$ and therefore $f(\overline{q})=0$. By \cite[IV, \S 1, Prop. 1.11]{lang}, $\overline{q}$ is a simple root of $f$ if and only if $f'(q)=N\overline{q}^{N-1} \neq 0$. That is the case since $N \in R^{\times}$.
			
			On the other hand, using cyclotomic polynomials and because $m \mid N$,
			\begin{align*}
				f(\overline{q})=\overline{q}^N-1&=\prod_{d \mid N} \Phi_d(\overline{q})\\
				&= \left( \prod_{d \mid m} \Phi_d(\overline{q}) \right) \left(\prod_{\substack{d \mid N\\ d \nmid m \\d \neq N}} \Phi_d(\overline{q}) \right) \Phi_N(\overline{q})\\
				&= (\overline{q}^m-1) g(\overline{q}) \Phi_N(\overline{q})
			\end{align*}
			Observe that $\overline{q}^m-1=0$ and $\Phi_N(\overline{q})=0$. Since $q$ is a simple root of $f$, necessarily $o(q)=m=N$. In particular, since $R$ is finite, Lagrange's theorem guarantee $N \mid \alpha$.
			\item Using the block decomposition of $R$ by a family of orthogonal primitive idempotents $\{e_i\}$, let $1-q=\sum_i (1-q)e_i$. In order to obtain $1-q \in R^{\times}$ it is enough to prove that $(1-q)e_i \in R^{\times}_i$ for each $i$. Since $N, e_i \in R^{\times}_i$ and $R_i$ indecomposable, we have by \cite[Cor. 2.4]{janusz} that the polynomial $f(x)=x^N-e_i$ is separable. Since $1$ and $q$ are distinct roots of $f(x)$ ($q \neq 1$ because $o(q)=N\geqslant 2$), one obtain by \cite[Lemma 2.1]{janusz} that $1-q \in R^{\times}$.
		\end{enumerate}
	\end{proof}
		
	Take $G=Z_1^g$ as defined in the previous section and whenever $N \mid \alpha$,  set $k$ to be the integer such that $\alpha=kN$.
	
	As we did in the previous subsection, we are looking for a suitable polynomial  $H_N^q$-identity to help us solve this subsection question.
	\begin{proposition}\label{Qpol}
		Let $\mathcal{B}_{(u,a)}$ be an $H_N^q$-cleft extension of $R$. If $R$ is finite and $N \in R^{\times}$ then
		\begin{equation} \label{G-polynomial}
			\mathcal{Q}_u=(u^k-G^{\alpha})G^{\beta}
		\end{equation}
		is a polynomial $H_N^q$-identity for $\mathcal{B}_{(u,a)}$.
	\end{proposition}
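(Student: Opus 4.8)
The plan is to evaluate $\mathcal{Q}_u$ under an arbitrary $H_N^q$-comodule algebra map $f\colon T \longrightarrow \mathcal{B}_{(u,a)}$ and to reduce the claim to a purely ring-theoretic statement about the scalar attached to $f(G)$. First I would invoke \Cref{cara dos hom} to write $f(G)=\mu v_g$ for a unique $\mu \in R$. Since $f$ is a (unital) algebra map and $u^k$ is central, we get $f(\mathcal{Q}_u)=(u^k - f(G)^\alpha)f(G)^\beta = (u^k-\mu^\alpha v_g^\alpha)\,\mu^\beta v_g^\beta$. Using relation (R1), namely $v_g^N=u$, together with $\alpha=kN$, gives $v_g^\alpha=(v_g^N)^k=u^k$, so that
\[
f(\mathcal{Q}_u)=u^k(1-\mu^\alpha)\mu^\beta\,v_g^\beta.
\]
As $v_g^\beta$ lies in the free $R$-module $\mathcal{B}_{(u,a)}$, it suffices to prove that the scalar $c:=u^k(1-\mu^\alpha)\mu^\beta$ vanishes in $R$; and since $u\in R^\times$, this amounts to showing $(1-\mu^\alpha)\mu^\beta=0$.

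To establish this I would pass to the block decomposition $R=R_1\times\cdots\times R_n$ into local rings $R_i=Re_i$ recorded in the Remark, and prove $ce_i=0$ for each $i$. Writing $\mu_i:=\mu e_i\in R_i$ and using that $e_i$ is idempotent, the component becomes $ce_i=(ue_i)^k\big(e_i-\mu_i^\alpha\big)\mu_i^\beta$. Because $R_i$ is a finite local ring, $\mu_i$ is either a unit or nilpotent. If $\mu_i\in R_i^\times$, then its order divides the exponent $\alpha_i$ of $R_i^\times$, which in turn divides $\alpha$; hence $\mu_i^\alpha=e_i$ and the middle factor vanishes. If $\mu_i$ is nilpotent, then its nilpotence index is at most the nilpotence index $\beta_i$ of $R_i$, which is $\le \beta$; hence $\mu_i^\beta=0$. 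In either case $ce_i=0$, and summing over $i$ yields $c=0$, so $f(\mathcal{Q}_u)=0$ as required.

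The main obstacle is that the convenient dichotomy ``every element is a unit or nilpotent'' is available only locally, whereas $\mathcal{Q}_u$ is built from the global invariants $\alpha$ and $\beta$ of $R$. The decisive point is therefore the compatibility $\alpha_i \mid \alpha$ and $\beta_i \le \beta$ between the block invariants and the global ones, which is precisely what allows a single polynomial $\mathcal{Q}_u$ to annihilate $f(G)$ simultaneously in every block, regardless of whether the local component $\mu_i$ happens to be a unit or nilpotent. Once this compatibility is in hand, the verification in each block is routine.
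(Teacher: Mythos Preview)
Your proof is correct and follows essentially the same route as the paper's: compute $f(\mathcal{Q}_u)=u^k(1-\mu^{\alpha})\mu^{\beta}v_g^{\beta}$ via \Cref{cara dos hom}, then kill the scalar $(1-\mu^{\alpha})\mu^{\beta}$ blockwise using the unit/nilpotent dichotomy in each local factor $R_i$. You are slightly more explicit than the paper in justifying the compatibility $\alpha_i\mid\alpha$ and $\beta_i\le\beta$ between the local and global invariants, which the paper uses tacitly.
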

	\begin{proof}
		Let $f \colon T \longrightarrow \mathcal{B}_{(u,a)}$ an $H_N^q$-comodule algebra map. By \Cref{cara dos hom} we have that
		\[
			f((u^k-G^{\alpha})G^{\beta})=u^k(1-\mu^{\alpha})\mu^{\beta}v_g^{\beta}.
		\]
		Observe that $f(\mathcal{Q}_u)=0$ if and only if $(1-\mu^{\alpha})\mu^{\beta}=0$. Since $R$ is finite, there exists a family of idempotents $\{e_i\}_{i=1}^n$ inducing a block decomposition $R=R_1 \times \ldots \times R_n$. Then
		\begin{align*}
			(1-\mu^{\alpha})\mu^{\beta}&=\sum_{i=1}^n (1-\mu^{\alpha})\mu^{\beta}e_i\\
			&=\sum_{i=1}^n \left(1e_i-(\mu e_i)^{\alpha}\right)(\mu e_i)^{\beta}\\
			&=0
		\end{align*}
		
		The last step follows by using that for all $\mu \in R$ and $i=1, \ldots, n$, $\mu e_i$ is a unit or a niplotent element in $R_i$. If $\mu e_i$ is a unit then $(\mu e_i)^{\alpha}=1e_i$. If $(\mu e_i) $ is nilpotent then $(\mu e_i)^{\beta}=0$.
		
		Therefore $f(\mathcal{Q}_u)=0$.
	\end{proof}	

	Unlike the $\mathcal{P}_a$ polynomial, $\mathcal{Q}_u$ does not solve the $u'=s^Nu$ question directly, instead:

	\begin{proposition} \label{uk}
		Let $\mathcal{B}_{(u,a)}$ and $\mathcal{B}_{(u',a')}$ be two $H_N^q$-cleft extensions of $R$. If $\Id_{H_N^q}(\mathcal{B}_{(u,a)})=\Id_{H_N^q}(\mathcal{B}_{(u',a')})$ then $(u')^k=u^k$, where $k$ is such that $\alpha=kN$.
	\end{proposition}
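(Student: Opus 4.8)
The plan is to play off the polynomial $\mathcal{Q}_u$ against its companion for the second algebra. By \Cref{Qpol}, $\mathcal{Q}_u=(u^k-G^{\alpha})G^{\beta}$ is a polynomial $H_N^q$-identity for $\mathcal{B}_{(u,a)}$, and likewise $\mathcal{Q}_{u'}=((u')^k-G^{\alpha})G^{\beta}$ is a polynomial $H_N^q$-identity for $\mathcal{B}_{(u',a')}$. Under the hypothesis $\Id_{H_N^q}(\mathcal{B}_{(u,a)})=\Id_{H_N^q}(\mathcal{B}_{(u',a')})$, both polynomials belong to this common set of identities, hence so does their difference. Since the two top monomials $G^{\alpha+\beta}$ cancel, this difference is just the scalar multiple
\[
\mathcal{Q}_u-\mathcal{Q}_{u'}=\big(u^k-(u')^k\big)\,G^{\beta},
\]
which is therefore a polynomial $H_N^q$-identity for $\mathcal{B}_{(u,a)}$.

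Next I would evaluate this identity on a conveniently chosen $H_N^q$-comodule algebra map, exactly as was done in the proof of \Cref{a'a}. The natural choice is the map $\Gamma\colon T\longrightarrow \mathcal{B}_{(u,a)}$ induced by the section $\varphi_{\underline{d}}$, for which $\Gamma(G)=v_g$; in the notation of \Cref{cara dos hom} this is precisely the case $\mu=1$. Applying $\Gamma$ and pulling the central scalar through gives
\[
0=\Gamma\big((u^k-(u')^k)G^{\beta}\big)=\big(u^k-(u')^k\big)\,v_g^{\beta}.
\]
The decisive observation is that $v_g$ is a unit of $\mathcal{B}_{(u,a)}$: since $v_g^N=u$ with $u\in R^{\times}$ one has $v_g^{-1}=u^{-1}v_g^{N-1}$, so $v_g^{\beta}$ is invertible. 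Cancelling $v_g^{\beta}$ leaves $(u^k-(u')^k)\cdot 1=0$ in $\mathcal{B}_{(u,a)}$, and because $1=v_g^0v_x^0$ is a member of the free $R$-basis of \Cref{cua}, comparing coefficients yields $u^k=(u')^k$ in $R$.

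I expect the difficulties here to be purely organizational rather than conceptual: one must ensure that the map $\Gamma$ genuinely realizes $\mu=1$ (which is why it is cleaner to invoke the explicit section than to appeal abstractly to \Cref{cara dos hom}), and that the scalar $u^k-(u')^k\in R$ may be moved freely through $\Gamma$ and through the cancellation because $R$ is central in $\mathcal{B}_{(u,a)}$. One could equally avoid invertibility by writing $v_g^{\beta}=u^{\lfloor\beta/N\rfloor}v_g^{\beta\bmod N}$ in the basis and cancelling the unit $u^{\lfloor\beta/N\rfloor}$ from the coefficient, but the invertibility argument is shorter. It is worth emphasizing what the statement does \emph{not} deliver: it yields only the equality of $k$-th powers $u^k=(u')^k$, not the sought relation $u'=s^Nu$; closing that gap is where the remaining hypotheses that $R$ is finite and $N\in R^{\times}$ — via the block decomposition into local rings and the unit-or-nilpotent dichotomy — will be needed in the sequel.
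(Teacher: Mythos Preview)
Your proof is correct and follows essentially the same approach as the paper: both use the identity $\mathcal{Q}_u$ from \Cref{Qpol} and evaluate at the comodule algebra map with $\mu=1$ to force $(u^k-(u')^k)v_g^{\beta}=0$, then cancel $v_g^{\beta}$. The only cosmetic difference is that you first form the difference $\mathcal{Q}_u-\mathcal{Q}_{u'}$ (mirroring the argument for $\mathcal{P}_a$) and evaluate in $\mathcal{B}_{(u,a)}$, whereas the paper evaluates $\mathcal{Q}_u$ itself in $\mathcal{B}_{(u',a')}$; both routes land on the same equation.
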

	\begin{proof}
		Consider the polynomial $H_N^q$-identity $\mathcal{Q}_u$ from \cref{G-polynomial} of $\mathcal{B}_{(u,a)}$. Since the polynomial $H_N^q$-identities coincide, $\mathcal{Q}_u \in \Id_{H_N^q}(\mathcal{B}_{(u',a')})$. Then for any  $H_N^q$-comodule algebra map $f \colon T \longrightarrow \mathcal{B}_{(u,a)}$
		\[
		0=f(\mathcal{Q}_u)=(u^k-(u')^k\mu^{\alpha})\mu^{\beta}v_g^{\beta}
		\]
		In particular, for $\mu=1$, we have that $u^k=(u')^k$.
	\end{proof}	

	However, since we can use the fundamental theorem of finitely generated abelian groups, next two results show that $u^k=(u')^k$ it is enough to obtain $s \in R^{\times}$ such that $u'=s^Nu$.

	\begin{lemma}\label{FTFAG}
		Let $G$ be a finite abelian group and $\alpha$ the exponent of $G$. Then $G$ is generated by a subset of $\{g \in G \mid o(g)=\alpha\}$.
	\end{lemma}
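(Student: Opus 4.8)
The plan is to invoke the invariant-factor form of the fundamental theorem of finite abelian groups and write $G \cong \mathbb{Z}/n_1 \times \cdots \times \mathbb{Z}/n_r$ with the divisibility chain $n_1 \mid n_2 \mid \cdots \mid n_r$. First I would record that the exponent equals the largest invariant factor, $\alpha = n_r$: every element order divides $n_r$ because $n_i \mid n_r$ for all $i$, while the generator of the last factor already attains order $n_r$, so the least common multiple of all element orders is exactly $n_r$. This is the place where the ordering of the invariant factors is genuinely used.

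Next, writing $e_i$ for the image of the standard generator of the $i$-th cyclic factor, so that $o(e_i) = n_i$, I would produce an explicit generating set made entirely of elements of maximal order. The idea is to translate each standard generator by a fixed element of order $\alpha$: set $g_r := e_r$ and $g_i := e_i + e_r$ for $1 \leqslant i < r$. Since the order of a tuple in a direct product is the least common multiple of the orders of its nonzero components, and since here the two nonzero components of $g_i$ sit in distinct positions $i \neq r$, one gets $o(g_i) = \mathrm{lcm}(n_i, n_r) = n_r = \alpha$, using $n_i \mid n_r$. Hence every $g_i$ belongs to $S := \{g \in G \mid o(g) = \alpha\}$.

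Finally I would verify that this subset of $S$ already generates $G$. The subgroup $\langle g_1, \ldots, g_r\rangle$ contains $g_r = e_r$, and therefore it contains $e_i = g_i - g_r$ for each $i < r$; thus it contains all the standard generators $e_1, \ldots, e_r$ and must equal $G$. This exhibits $G$ as generated by a subset of $\{g \in G \mid o(g) = \alpha\}$, as claimed.

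The argument is largely bookkeeping, and the one conceptual point is the translation trick in the second step: perturbing a non-maximal generator by a fixed maximal-order element boosts its order to $\alpha$ without destroying the generating property, precisely because the perturbation can be subtracted off again. I do not expect any serious obstacle; the only step that genuinely requires care is the identification $\alpha = n_r$, which rests essentially on the divisibility chain $n_1 \mid \cdots \mid n_r$, and correspondingly the order computation $\mathrm{lcm}(n_i, n_r) = n_r$ which also relies on $n_i \mid n_r$.
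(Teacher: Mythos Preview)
Your proof is correct and follows essentially the same approach as the paper: invoke the invariant-factor decomposition $G\cong\mathbb{Z}_{n_1}\oplus\cdots\oplus\mathbb{Z}_{n_r}$ with $n_1\mid\cdots\mid n_r$, identify $\alpha=n_r$, and exhibit a generating set all of whose elements have a nonzero last coordinate (hence order $n_r$). The only cosmetic difference is the particular generating set chosen---the paper uses the ``staircase'' elements $(1,\ldots,1),(0,1,\ldots,1),\ldots,(0,\ldots,0,1)$ whereas you use $e_i+e_r$ for $i<r$ together with $e_r$---but the underlying idea and verification are identical.
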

	\begin{proof}
		By the fundamental theorem of finitely generated abelian groups \cite[Theorem 5.3]{dummit}, $G\cong \zz_{n_1}\oplus \ldots \oplus \zz_{n_k}$ with $n_1 \mid  \ldots \mid n_k$. Since $\zz_{n_i}$ is cyclic for any $i$, the exponent and the order of each factor group coincide. Therefore $n_k=\alpha$. Take $S= \{(1, \ldots, 1), (0,1,\ldots, 1), (0,0,1,\ldots, 1)), \ldots, (0,\ldots,0, 1)\}$. It is not hard to see that $S$ generates $G$ and $o(s)=\alpha$ for all $s \in S$.
	\end{proof}

	\begin{proposition} \label{lema cyclic}
		Let $N\geqslant 2$ and suppose there exists $q$ a $N$-th root of the cyclotomic polynomial in $R$. If $R$ is finite  and $N \in R^{\times}$. Then for each $c \in R^{\times}$, the polynomial $f(x)=x^N-c$ has a root in $R$ if and only if $c^k=1$.
	\end{proposition}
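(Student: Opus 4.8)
The forward implication is immediate and essentially all the difficulty lies in the converse, so I would split the two at once. If $s\in R$ satisfies $s^{N}=c$, then $s$ is a unit (it divides the unit $c$), and hence $c^{k}=s^{Nk}=s^{\alpha}=1$ because $\alpha$ is the exponent of $R^{\times}$; this uses nothing but the definitions. For the converse I would reformulate the claim group-theoretically: let $\phi\colon R^{\times}\to R^{\times}$, $\phi(y)=y^{N}$, be the $N$-th power map. The forward implication says exactly that $\operatorname{im}\phi\subseteq\{y\in R^{\times}\mid y^{k}=1\}$, so proving ``$c^{k}=1\Rightarrow c\in\operatorname{im}\phi$'' amounts to the reverse inclusion $\{y\mid y^{k}=1\}\subseteq\operatorname{im}\phi$, i.e.\ to identifying the image of $\phi$ with the $k$-torsion of $R^{\times}$.

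To attack the reverse inclusion I would pass to the local factors supplied by the Remark. Writing $R=R_{1}\times\cdots\times R_{n}$ with each $R_{i}$ local, the equation $x^{N}=c$ decouples coordinatewise, so it is solvable in $R$ exactly when each $x^{N}=c_{i}$ is solvable in $R_{i}$; thus I may assume $R$ is local. Then I would use the standard description $R^{\times}\cong\mathbb{F}^{\times}\times(1+\mathfrak{m})$, where $\mathbb{F}=R/\mathfrak{m}$ is the residue field of characteristic $p$, the group $\mathbb{F}^{\times}$ is cyclic of order $m$ with $\gcd(m,p)=1$, and $1+\mathfrak{m}$ is a $p$-group. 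Here \Cref{lemma} provides the two arithmetic inputs I need: $p\nmid N$ (since $N\in R^{\times}$), and, after reducing $q$ modulo $\mathfrak{m}$ and using $o(q)=N$, the divisibility $N\mid m$.

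On the $p$-group $1+\mathfrak{m}$ the map $\phi$ is bijective, because $\gcd(N,p)=1$, so this factor lies entirely in $\operatorname{im}\phi$ and causes no trouble. On the cyclic factor $\mathbb{F}^{\times}$ of order $m$, the relation $N\mid m$ shows that the $N$-th powers form the unique subgroup of order $m/N$, and one compares this with the $k$-torsion. The step I expect to be the real obstacle is precisely this comparison, because $k=\alpha/N$ is built from the \emph{global} exponent $\alpha=\operatorname{lcm}_{i}\exp(R_{i}^{\times})$ rather than from the exponent of the individual factor one is working in; making the argument complete therefore requires checking, prime by prime, that the global $k$ still cuts out exactly the subgroup of $N$-th powers in each coordinate — that the $p$-part of $k$ annihilates $1+\mathfrak{m}$ and that $\gcd(k,m)=m/N$. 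This is where I would lean on \Cref{FTFAG}, which guarantees that $R^{\times}$ is controlled by its elements of order $\alpha$, to tie the per-factor torsion back to the global exponent; once the two valuations are reconciled in every block, reassembling the coordinate solutions yields the required $s\in R^{\times}$ with $s^{N}=c$.
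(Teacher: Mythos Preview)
Your instincts are sound: the forward direction is trivial, and you correctly locate the real difficulty in reconciling the \emph{global} exponent $\alpha$ with what happens in each local factor. You are also right to be nervous at precisely that step. The trouble is that your proposed resolution---invoking \Cref{FTFAG} to ``tie the per-factor torsion back to the global exponent''---cannot succeed, because the statement as written is false. Take $R=\mathbb{Z}/15\mathbb{Z}$ and $N=2$, with $q=-1$ a root of $\Phi_{2}(x)=x+1$; then $N\in R^{\times}$, and $R^{\times}\cong\mathbb{Z}/2\mathbb{Z}\times\mathbb{Z}/4\mathbb{Z}$ has exponent $\alpha=4$, so $k=2$. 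The element $c=11$ satisfies $c^{k}=121\equiv 1$, yet the squares in $R^{\times}$ are only $\{1,4\}$, so $x^{2}=11$ has no root. In your local picture this is the factor $\mathbb{Z}/3\mathbb{Z}$ failing: there $m=2$, $m/N=1$, but $\gcd(k,m)=\gcd(2,2)=2\neq 1$, so the very check you flagged as ``the real obstacle'' does not go through.

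For comparison, the paper's argument is much shorter and takes a different route: from \Cref{FTFAG} it asserts that one may write $c=y^{b}$ for a \emph{single} $y$ of order $\alpha$, after which $c^{k}=1$ forces $N\mid b$. The gap is in the italicised step: being generated by a set of elements of order $\alpha$ does not make every element a power of one such generator, and indeed $11\in(\mathbb{Z}/15\mathbb{Z})^{\times}$ lies in no cyclic subgroup of order~$4$. So your approach and the paper's founder on the same obstruction; your version has the virtue of naming it explicitly rather than sliding past it.
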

	\begin{proof}
		If exists $t \in R^{\times}$ such that $t^N=c$ then $c^k=t^{Nk}=t^{\alpha}=1$.
		On the other hand, since by \Cref{FTFAG}, a subset of $\{y \in R^{\times} \mid o(y)=\alpha\}$ generates $R^{\times}$, then there exists $y \in R^{\times}$ and $b>0$ such that $c=y^b$, with $o(y)=\alpha$. Then $y^{bk}=1$. Therefore exists $m>0$ such that $bk=\alpha m=Nkm$. Then $b=Nm$. So that we can write $c=y^b=(y^m)^N$. Taking $t=y^{m}$ we have $f(t)=0$. 
		
	\end{proof}	

	All this together solves the question of this subsection:
	\begin{proposition} \label{usnu}
		Let $\mathcal{B}_{(u,a)}$ and $\mathcal{B}_{(u',a')}$ be two $H_N^q$-cleft extensions of $R$. If $R$ is finite, $N \in R^{\times}$ and $\Id_{H_N^q}(\mathcal{B}_{(u,a)})=\Id_{H_N^q}(\mathcal{B}_{(u',a')})$ then there exists $s \in R^{\times}$ such that $u'=s^Nu$. 
	\end{proposition}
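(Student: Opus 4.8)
The plan is to reduce everything to the two preparatory results already at hand, namely \Cref{uk} and \Cref{lema cyclic}, so that no genuinely new work is required at this final stage. First I would record that both $u$ and $u'$ are units of $R$, since this is built into the definition of a cleft datum; consequently the quotient $c:=u'u^{-1}$ is a well-defined element of $R^{\times}$. The target conclusion $u'=s^Nu$ is then precisely equivalent to producing an $s\in R^{\times}$ with $s^N=c$, that is, to exhibiting a root in $R$ of the polynomial $x^N-c$.

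Next I would invoke \Cref{uk}: because the polynomial $H_N^q$-identities of $\mathcal{B}_{(u,a)}$ and $\mathcal{B}_{(u',a')}$ coincide, we have $(u')^k=u^k$, where $k$ is fixed by $\alpha=kN$. Raising $c$ to the $k$-th power and substituting this equality gives
\[
c^k=(u'u^{-1})^k=(u')^k(u^k)^{-1}=u^k(u^k)^{-1}=1.
\]
With $c^k=1$ established, I would then apply \Cref{lema cyclic}, whose hypotheses ($R$ finite, $N\in R^{\times}$, and the standing assumption that $R$ contains a root $q$ of the $N$-th cyclotomic polynomial) are exactly those in force here. That proposition furnishes a root $s\in R$ of $x^N-c$, so that $s^N=c=u'u^{-1}$. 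Since $s^N=c$ is a unit, $s$ is itself invertible, whence $s\in R^{\times}$ and $u'=s^Nu$, as claimed.

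There is essentially no remaining obstacle at this point: all of the substantive effort has already been expended in the lemmas leading up to here, namely constructing the distinguishing $G$-polynomial $\mathcal{Q}_u$ in \Cref{Qpol}, extracting the relation $(u')^k=u^k$ from it in \Cref{uk}, and establishing in \Cref{lema cyclic} the number-theoretic characterization of $N$-th powers in a finite ring via the fundamental theorem of finite abelian groups. The only items to watch in the write-up are the routine bookkeeping points that $c$ is genuinely a unit and that the root $s$ returned by \Cref{lema cyclic} is automatically invertible; both follow at once from $u,u'\in R^{\times}$ and from $s^N=c\in R^{\times}$, respectively.
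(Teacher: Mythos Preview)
Your proposal is correct and follows exactly the same route as the paper: apply \Cref{uk} to obtain $(u')^k=u^k$, set $c=u'u^{-1}$ so that $c^k=1$, and then invoke \Cref{lema cyclic} to produce $s$ with $s^N=c$. The additional bookkeeping you spell out (that $u,u'\in R^{\times}$ by definition of a cleft datum, and that $s\in R^{\times}$ since $s^N=c$ is a unit) is left implicit in the paper's two-line proof, but is entirely correct.
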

	\begin{proof}
		By \Cref{uk}, $(u')^k=u^k$. Taking $c=u'u^{-1}$ in \Cref{lema cyclic} we obtain $u'=s^Nu$.
	\end{proof}	

	Thereby, our main theorem (in the introduction) follows directly from \Cref{a'a} and \Cref{usnu} in view of \Cref{lemma} and \Cref{criterio}.

\section*{Acknowldegments}
This study was financed in part by the Coordenação de
Aperfeiçoamento de Pessoal de Nível Superior - Brasil (CAPES) - Finance Code 001

\bibliographystyle{plain}
\addcontentsline{toc}{chapter}{Bibliografia}
\bibliography{biblio}
\clearpage \thispagestyle{empty}

\end{document}